\theoremstyle{plain}
\newtheorem{theorem}{Theorem}[section]
\newtheorem{lemma}[theorem]{Lemma}
\newtheorem{corollary}[theorem]{Corollary}
\theoremstyle{definition}
\newtheorem{definition}[theorem]{Definition}
\newtheorem{example}[theorem]{Example}
\theoremstyle{remark}
\newtheorem*{remark}{Remark}
\newcommand{\bd}{\partial}
\newcommand{\C}{\mathbb{C}}
\newcommand{\R}{\mathbb{R}}
\newcommand{\N}{\mathbb{N}}
\newcommand{\psh}{\mathcal{PSH}}
\newcommand{\usc}{\mathcal{USC}}
\newcommand{\suchthat}{\mathrel{;}}
\title[Plurisubharmonic functions with discontinuous boundary behavior]{Plurisubharmonic functions with \\ discontinuous boundary behavior}
\author{Mårten Nilsson}
\address{Center for Mathematical Sciences\\
  Lund University\\
  Box 118, SE-221 00 Lund, Sweden}
\email{marten.nilsson@math.lth.se}
\subjclass[2010]{Primary 32U05; Secondary 32U15, 31C10, 31C05}
\begin{document}

\maketitle
\begin{abstract}
   We study the Dirichlet problem for the complex Monge--Ampère operator with bounded, discontinuous boundary data. If the set of discontinuities is b-pluripolar and the domain is B-regular, we are able to prove existence, uniqueness and some regularity estimates for a large class of complex Monge--Ampère measures. This result is optimal in the unit disk, as boundary functions with b-pluripolar discontinuity then coincide with functions that are continuous almost everywhere. We also show that neither of these properties of the boundary function -- being continuous almost everywhere or having discontinuities forming a b-pluripolar set -- are necessary conditions in order to establish uniqueness and continuity of the solution in higher dimensions, if one additionally requires that lower limits of the solution coincides with the lower limits of the boundary function. In particular, there are situations where it is enough to prescribe the limit behavior at a set of arbitrarily small Lebesgue measure.
\end{abstract}
\section{Introduction}
 A central result in potential theory is the solution of the generalized Dirichlet problem, namely that for a general domain \(\Omega \subset \C_\infty\) with non-polar boundary and \(\phi: \partial \Omega \rightarrow \R\) continuous n.e.\ (nearly everywhere, i.e.\ outside a polar set), there exists a unique bounded harmonic function \(h\) such that 
\[\lim_{z \rightarrow \zeta \in \partial \Omega}h(z) = \phi(\zeta)\quad\text{ n.e.}\]  
The real power of this theorem lies in the generality of the domain, and its proof relies on the extended maximum principle. For regular domains, such as the unit disk, stronger results are available due to integral representations. In particular, the Herglotz--Riesz representation theorem implies that bounded harmonic functions on the unit disk are in one-to-one correspondence with  essentially bounded functions on the circle, and one may show that the boundary values are attained at all points where the boundary data is continuous. See Ransford~\cite[Corollary~4.2.6]{ransford} and Garnett and Marshall~\cite[Corollary~I.2.5]{garnett} for more details.

Few attempts have been made to investigate to which degree these results persist in the plurisubharmonic setting, although we should mention that specific examples of plurisubharmonic functions with boundary discontinuities were considered already by Bedford~\cite{bedford}. The main obstruction arising in higher dimensions is that due to nonlinearity of the complex Monge--Ampère operator, it is no longer enough to appeal to a maximum principle to establish uniqueness to the Dirichlet problem, and we do not have a Poisson integral at our disposal. Instead, one needs to use the comparison principle, with the major drawback that pluripolar sets cannot be neglected. 

However, in a recent paper, Rashkovskii~\cite{rashkovskii} observed that it is indeed possible to formulate such a comparison principle, only referring to points on the boundary outside a pluripolar set. In Section~2, we modify his proof slightly to also encompass \textit{b-pluripolar sets}, a notion recently introduced by Djire and Wiegerinck \cite{djire}. We say that a set \(F \subset \partial \Omega\) is b-pluripolar if there exists \(v \in \psh(\Omega)\) such that \(v\leq 0, v \not \equiv -\infty\) and \(v^* = - \infty\) on \(F\). Our generalization of Rashkovskii's comparison principle provides a uniqueness argument for the inhomogeneous Dirichlet problem for the complex Monge--Ampère operator,
    \[\begin{cases}
    u \in \psh(\Omega)\cap L^\infty(\Omega)  \\
    (dd^cu)^n=\mu \\
        \lim_{\Omega \ni \zeta \rightarrow z_0} u(\zeta) = \phi(z_0),\quad\forall z_0 \in \bd\Omega \setminus E_\phi,
    \end{cases}\]
where \(\Omega\) is B-regular and \(\phi: \bd \Omega \rightarrow \R\) is a bounded function, continuous outside a b-pluripolar set \(E_\phi\). Conversely, we show that the uniqueness of such Dirichlet problems implies that $E_\phi$ is b-pluripolar. We also show that for a large class of densities, one may estimate the extent of discontinuities in the interior, and we provide an example which shows that this estimate is sharp.

 Compellingly, the above result provides an alternative description of negligible sets with respect to harmonic measure. In particular, b-pluripolar discontinuity sets and discontinuity sets of Lebesgue measure zero coincide in the case of the unit circle \(\bd D\) (see also the second remark to Theorem~\ref{maintheorem} below). On the other hand, given \(A \subset \bd D\) and a fixed bounded function \(\phi: \bd D \rightarrow \R\), continuous outside \(A\), the following four statements are equivalent:
\begin{enumerate}[label=(\roman*)]
    \item \(A\) is b-pluripolar.
    \item \(A\) has Lebesgue measure zero.
        \item There exists a unique bounded harmonic function \(h_\phi\) such that
    \begin{align*}
                \lim_{D \ni \zeta \rightarrow z_0} h_\phi(\zeta) &= \phi(z_0),\quad\forall z_0 \in \bd D \setminus A.
    \end{align*}
    \item There exists a unique bounded harmonic function \(h_\phi\) such that
    \begin{alignat*}{2}
                \lim_{D \ni \zeta \rightarrow z_0} h_\phi(\zeta) &= \phi(z_0), && \forall z_0 \in \bd D \setminus A, \\
                \liminf_{D \ni \zeta \rightarrow z_0} h_\phi(\zeta) &= \liminf_{\partial D\setminus A \ni \zeta \rightarrow z_0}\phi(\zeta), \quad && \forall z_0 \in A. 
    \end{alignat*}

\end{enumerate}
 This leads us to the following questions: Is it possible to find B-regular domains in \(\C^n, n>1\) and boundary data for which the Dirichlet problem (with the addition of lower limits) is uniquely solvable, where the discontinuities form a set that
\begin{enumerate}[label=\alph*)]
    \item is not b-pluripolar, and
    \item has positive Lebesgue measure?
\end{enumerate}
With the goal of providing affirmative answers to these questions, we introduce, in Section~3, a class of functions on Reinhardt domains for which several Perron--Bremermann constructions are particularly well-behaved, ultimately due to an approximation result of Wiegerinck and Fornæss~\cite{fornaess}. An interesting corollary of these considerations is that the classic theorem due to J.B.~Walsh~\cite{walsh}, which in modern terminology says that on B-regular domains,
 \[
\varphi \text{ uniformly continuous on }\Omega  \implies P(\varphi) \in C(\overline\Omega),
 \]
has a counterpart valid on all bounded Reinhardt domains. Here, \(P(\varphi)\) denotes the Perron--Bremermann envelope
 \[
P(\varphi) := \sup \{u(z) \suchthat u \in \psh(\Omega), u^* \leq \varphi\}
 \]
of a function \(\varphi: \Omega \rightarrow \R\). Note that the regularization enables us to use the same notation when \(\varphi\) instead is defined on the boundary of \(\Omega\). 

Finally, in Section 4, we show that on the unit ball with torically invariant characteristic functions as boundary data, the Dirichlet problem does indeed for a large class of measures admit unique solutions, continuous in the interior. Here, the uniqueness argument is provided by Djire and Wiegerinck's partial answer~\cite[Theorem 2.11]{djire} to Sadullaev's question concerning when the upper semicontinuous regularization of a variety of boundary extremal functions coincide~\cite{sadu}. As a result, we are able to show that (iv) does not imply (i), (ii) or (iii) in the plurisubharmonic setting.

The author would like to thank Frank Wikström for helpful discussions, comments and suggestions.
\section{Boundary values with b-pluripolar discontinuity}
In this section we consider the Dirichlet problem for the complex Monge--Ampère equation under the assumption that the discontinuities in the boundary data form a b-pluripolar set. In this case, uniqueness follows from the following extended version of the domination principle.  
\begin{lemma}\label{rash} Let \(u,v \in \psh(\Omega)\cap L^\infty(\Omega), \Omega \Subset \C^n\) and suppose that
\[
\limsup_{z \rightarrow \zeta}(u(z) - v(z)) \leq 0 \quad \forall \zeta \in \partial \Omega \setminus F,
\]
where \(F \subset \partial \Omega\) is b-pluripolar. If \((dd^c v)^n \leq (dd^c u)^n\), then \(u \leq v\) on \(\Omega\). In particular, if \(\lim_{z \rightarrow \zeta}(u(z) - v(z)) = 0\) for all \(\zeta \in \partial \Omega \setminus F\) and \((dd^c v)^n = (dd^c u)^n\), then \(u = v\).
\end{lemma}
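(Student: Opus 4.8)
The plan is to prove the domination principle by a comparison-principle argument, using the b-pluripolar set $F$ to construct a family of negative plurisubharmonic functions that I can add to $u$ to dominate the bad boundary behavior, and then let that correction vanish in the limit. Let me sketch the strategy.

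The plan is to reduce everything to the classical Bedford--Taylor comparison principle by absorbing the exceptional set $F$ into the boundary estimate. Since $F$ is b-pluripolar, I would fix $w \in \psh(\Omega)$ with $w \leq 0$, $w \not\equiv -\infty$ and $\limsup_{z \to \zeta} w(z) = -\infty$ for every $\zeta \in F$, and for $\eps > 0$ set $u_\eps := u + \eps w \in \psh(\Omega)$. The purpose of this perturbation is that it repairs the boundary inequality at every point: for $\zeta \in \bd\Omega \setminus F$ one has $\limsup_{z \to \zeta}(u_\eps - v) \leq \limsup_{z \to \zeta}(u - v) + \eps \limsup_{z \to \zeta} w \leq 0$ since $w \leq 0$, whereas for $\zeta \in F$ the boundedness of $u$ and $v$ together with $\limsup_{z\to\zeta} w = -\infty$ forces $\limsup_{z\to\zeta}(u_\eps - v) = -\infty$. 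Thus $\limsup_{z\to\zeta}(u_\eps - v) \leq 0$ for \emph{every} $\zeta \in \bd\Omega$.

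The hard part is that $w$, and hence $u_\eps$, may equal $-\infty$ on the pluripolar set $P := \set{w = -\infty}$, so $u_\eps$ need not be bounded and $(dd^c u_\eps)^n$ is not a priori defined in the Bedford--Taylor sense. To sidestep this I would truncate: for $\delta > 0$ put $u_\eps^\delta := \max(u_\eps, v - \delta)$, which is plurisubharmonic and, because $v$ is bounded and $u_\eps \leq u$, also bounded. On the open set $\set{u_\eps > v - \delta}$ the function $w$ is bounded, so there $u_\eps^\delta = u + \eps w$ is a sum of locally bounded plurisubharmonic functions; expanding the Monge--Ampère operator and discarding the nonnegative mixed terms gives $(dd^c u_\eps^\delta)^n \geq (dd^c u)^n \geq (dd^c v)^n$, while on the interior of $\set{u_\eps \leq v - \delta}$ we have $u_\eps^\delta = v - \delta$ and so $(dd^c u_\eps^\delta)^n = (dd^c v)^n$. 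Hence $(dd^c u_\eps^\delta)^n \geq (dd^c v)^n$ throughout $\Omega$, and on $\bd\Omega$ one checks $\limsup_{z\to\zeta}(u_\eps^\delta - v) = \max(\limsup_{z\to\zeta}(u_\eps - v), -\delta) \leq 0$. The comparison principle applied to the bounded functions $v$ and $u_\eps^\delta$ then yields $v \geq u_\eps^\delta \geq u_\eps$ on $\Omega$.

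It remains to remove the perturbation and the pluripolar set. Letting $\eps \to 0$ in $u + \eps w \leq v$ gives $u \leq v$ at every point where $w > -\infty$, i.e.\ on $\Omega \setminus P$. Since $P$ is pluripolar it is Lebesgue null, so for $z_0 \in P$ and a ball $B_r(z_0) \Subset \Omega$ the sub-mean value inequality gives $u(z_0) \leq \frac{1}{\abs{B_r}}\int_{B_r} u \, dV \leq \frac{1}{\abs{B_r}}\int_{B_r} v \, dV$, and as $r \to 0$ the last average decreases to $v(z_0)$; hence $u \leq v$ on all of $\Omega$. Finally, the ``in particular'' clause follows by symmetry: its hypotheses are unchanged under swapping $u$ and $v$, so applying the first part in both directions gives $u \leq v$ and $v \leq u$, i.e.\ $u = v$. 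I expect the truncation step, together with the verification of $(dd^c u_\eps^\delta)^n \geq (dd^c v)^n$, to be the only genuinely delicate points; the rest is a direct application of standard pluripotential theory.
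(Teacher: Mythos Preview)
Your approach is sound and shares the paper's core idea: perturb \(u\) by a small multiple of a b-pluripolar defining function \(w\) so that the boundary inequality holds \emph{everywhere}, then invoke the classical comparison/domination principle and pass to the limit. The difference lies in how the unboundedness of \(w\) is handled. You take the max \(u_\eps^\delta=\max(u+\eps w,\,v-\delta)\), which forces you to verify \((dd^c u_\eps^\delta)^n\ge (dd^c v)^n\) across the contact set \(\{u+\eps w=v-\delta\}\); as you correctly flag, checking the inequality only on the two open pieces does not immediately give it on all of \(\Omega\). This gap can be closed (e.g.\ via the Demailly inequality \((dd^c\max(u_1,u_2))^n\ge \mathbf 1_{\{u_1\ge u_2\}}(dd^cu_1)^n+\mathbf 1_{\{u_1<u_2\}}(dd^cu_2)^n\) after first replacing \(w\) by \(\max(w,-M)\) and letting \(M\to\infty\)), but it is genuinely extra work.

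The paper sidesteps this entirely by truncating \(w\) rather than \(u_\eps\): it sets \(u_\delta:=u+\delta\bigl(\max\{\,w,-1/\delta^2\}-1\bigr)\), which is globally bounded and plurisubharmonic, so \((dd^c u_\delta)^n\ge (dd^c u)^n\) follows at once from multilinearity, and the boundary estimate \(\limsup(u_\delta-v)\le -\delta\) holds because on \(F\) the truncated term contributes \(-1/\delta\). The paper then proves the \emph{integral} comparison inequality \(\int_{\{v<u\}}(dd^cv)^n\ge\int_{\{v<u\}}(dd^cu)^n\) and appeals to the standard reduction from comparison to domination, using that bounded Monge--Amp\`ere measures charge no pluripolar set to discard \(\{w=-\infty\}\). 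Your endgame---mean-value inequality to pass from \(\Omega\setminus P\) to \(\Omega\)---is a pleasant alternative to that last step. In short: both routes work; the paper's truncation of \(w\) buys a one-line Monge--Amp\`ere inequality, whereas your truncation keeps the argument pointwise throughout but leaves the contact-set estimate to be justified.
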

\begin{proof}
As in the proof of the domination principle~\cite{guedj}, it is enough to establish the corresponding comparison principle, i.e.\ that our assumptions imply that
\[
\int_{v < u} (dd^cv)^n \geq  \int_{v < u} (dd^cu)^n.
\]
We reason as follows: Since \(F\) is b-pluripolar, we may find \(\phi \in \psh(\Omega)\) satisfying \(\phi\leq 0\) and \(\phi^* = - \infty\) on \(F\), such that
\[
P_\phi := \{z \in \Omega \suchthat \phi = - \infty\}
\]
is a pluripolar set. Replacing \(\delta \psi\) by \(\delta \max\{\phi, -\frac{1}{\delta^2}\}\) in the proof of \cite[Lemma 3.5]{rashkovskii}, we construct 
\[
u_\delta := u + \delta (\max\{\phi, - \frac{1}{\delta^2}\}-1)
\]
and notice that
\[
\limsup_{z \rightarrow \zeta}(u_\delta(z) - v(z)) \leq - \delta \quad \forall \zeta \in \partial \Omega 
\]
for \(\delta\) small enough. It follows from the comparison principle that 
\[
\int_{v < u_\delta} (dd^cv)^n \geq \int_{v < u_\delta} (dd^cu_\delta)^n \geq \int_{v < u_\delta} (dd^cu)^n,
\]
and letting \(\delta \rightarrow 0\), we conclude that
\[
\int_{v < u} (dd^cv)^n \geq  \int_{v < u} (dd^cu)^n,
\]
since \((dd^cu)^n(P_\phi)=(dd^cv)^n(P_\phi) = 0\). 
\end{proof}
The argument needed to provide existence remains valid for a wide class of complex Monge--Ampère measures. We use the following terminology, introduced in \cite{nilsson}. 
\begin{definition}
A measure \(\mu\) is said to be \textit{compliant} if
\[
    \begin{cases}
    u \in \psh(\Omega)\cap L^\infty(\Omega)  \\
    (dd^cu)^n=\mu \\
        \lim_{\Omega \ni \zeta \rightarrow z_0 \in \bd \Omega } u(\zeta) = \phi(z_0), \quad \forall z_0 \in \bd \Omega
    \end{cases}
\]
has a unique solution for every \(\phi \in C(\bd \Omega)\), where \(\Omega\) is a bounded domain. Furthermore, if the solution is always continuous, we say that \(\mu\) is \textit{continuously compliant}.
\end{definition}
\begin{remark}
Note that the existence of compliant measures implies that \(\Omega\) is B-regular, i.e.\ every continuous function \(\phi\) on \(\bd \Omega\) may be extended to a continuous, plurisubharmonic function in the interior. This follows from the fact that a compliant measure provides us with a plurisubharmonic function \(u_\phi\) such that
\[
\phi(z_0)=\liminf_{\Omega \ni \zeta \rightarrow z_0 \in \bd \Omega } u_\phi(\zeta) \leq \liminf_{\Omega \ni \zeta \rightarrow z_0 \in \bd \Omega } P(\phi)(\zeta),
\]
forcing \(P(\phi)\) to satisfy
\[
\lim_{\Omega \ni \zeta \rightarrow z_0 \in \bd \Omega }  P(\phi)(\zeta) = \phi(z_0).
\]
One may then apply \cite[Lemma~1]{walsh} to conclude that \(P(\phi)\) is a plurisubharmonic extension of \(\phi\), continuous in the interior. 
\end{remark}
When \(\mu\) is continuously compliant, it is possible to estimate at which points the solution might be discontinuous. This estimate will be given in terms of the defining family
\[
\mathcal{F}_{E_\phi}:=\{u\in \psh(\Omega) \suchthat u\not \equiv -\infty, u<0, u^*\mid_{E_\phi} = -\infty \}
\]
 for the \textit{b-pluripolar hull}
\[
\hat E_\phi:=\{z \in \bar \Omega \suchthat \forall u\in \mathcal{F}_{E_\phi}, u^*(z) = -\infty\}
\]
of the b-pluripolar set \(E_\phi\) of discontinuities on the boundary. 

We are now ready to formulate and prove our main result.
\begin{theorem}\label{maintheorem}
Let \(\mu\) be a compliant measure on a B-regular domain \(\Omega\), and let \(\phi: \partial \Omega \rightarrow \R\) be a bounded function, continuous outside \(E_\phi\). Then the Dirichlet problem
    \[\begin{cases}
    u \in \psh(\Omega)\cap L^\infty(\Omega)  \\
    (dd^cu)^n=\mu \\
        \lim_{\Omega \ni \zeta \rightarrow z_0} u(\zeta) = \phi(z_0),\quad\forall z_0 \in \bd\Omega \setminus E_\phi
    \end{cases}\]
has a unique solution if and only if \(E_\phi\) is b-pluripolar. If \(\mu\) additionally is continuously compliant, then the set of discontinuities is necessarily a subset of \[\bigcap_{u \in \mathcal{F}_{E_\phi}}\{u_*(z) = -\infty\}.\]
\end{theorem}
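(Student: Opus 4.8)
The plan is to read off uniqueness from Lemma~\ref{rash} and to build the solution by monotone approximation. For uniqueness, if $U_1$ and $U_2$ both solve the problem then $\lim_{z\to\zeta}(U_1-U_2)=0$ on $\bd\Omega\setminus E_\phi$ and $(dd^cU_1)^n=(dd^cU_2)^n=\mu$, so Lemma~\ref{rash} forces $U_1=U_2$. For existence I would set $M=\sup_{\bd\Omega}\abs\phi$ and pick continuous functions $\eta_j\uparrow$ and $\psi_j\downarrow$ on $\bd\Omega$, bounded by $M$, with $\eta_j\le\psi_j$ and $\eta_j,\psi_j\to\phi$ pointwise on $\bd\Omega\setminus E_\phi$ (possible since a bounded function continuous off $E_\phi$ is caught between its lower and upper regularizations, which agree with $\phi$ there). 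Compliance yields solutions $U_j^-,U_j^+$ with these data, monotone in $j$ with $U_j^-\le U_k^+$, and I would take $U:=\lim_j U_j^+$.

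Being a decreasing limit of uniformly bounded plurisubharmonic functions, $U$ is bounded and plurisubharmonic, and the Bedford--Taylor convergence theorem gives $(dd^cU)^n=\lim_j(dd^cU_j^+)^n=\mu$. At a continuity point $\zeta\in\bd\Omega\setminus E_\phi$ the sandwich $U_k^-\le U\le U_j^+$ with $\lim_{z\to\zeta}U_j^+=\psi_j(\zeta)$ and $\lim_{z\to\zeta}U_k^-=\eta_k(\zeta)$ forces $\lim_{z\to\zeta}U(z)=\phi(\zeta)$, so $U$ is the desired solution. For the final claim I now assume $\mu$ continuously compliant, so each $U_j^\pm$ is continuous, and write $\underline L:=\lim_j U_j^-$ for the lower semicontinuous increasing limit. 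Since $U$ is upper semicontinuous, $U$ is continuous at $z_0\in\Omega$ once $U_*(z_0)\ge U(z_0)$; as $U\ge U_j^-$ with $U_j^-$ continuous, $U_*(z_0)\ge\underline L(z_0)$ and $U(z_0)\ge\underline L(z_0)$. Hence it suffices to prove $U(z_0)\le\underline L(z_0)$ whenever there exists $w\in\mathcal F_{E_\phi}$ with $w_*(z_0)>-\infty$.

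Here is the heart of the matter. Fix such a $w$ (so $w(z_0)$ is finite), fix a small $\delta>0$, and truncate $w_\delta:=\max\set{w,-1/\delta^2}$. I would split $\bd\Omega\setminus E_\phi$ into $A_\delta=\set{w^*<-2M/\delta}$ and the piece $B_\delta=\set{w^*\ge-2M/\delta}$, the latter compact because $w^*$ is upper semicontinuous and $\equiv-\infty$ on $E_\phi$. On $A_\delta$ the bounded defect is absorbed, $\phi-\eta_j+\delta w_\delta^*\le 0$ for $\delta$ small; on $B_\delta$, Dini's theorem applied to $\eta_j\uparrow\phi$ makes $c_j(\delta):=\sup_{B_\delta}(\phi-\eta_j)\to 0$ as $j\to\infty$; and on $E_\phi$ the term $\delta w_\delta^*=-1/\delta$ dominates. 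Thus every boundary limsup of $(U+\delta w_\delta)-(U_j^-+c_j(\delta))$ is $\le 0$, and since $(dd^c(U+\delta w_\delta))^n\ge\mu=(dd^cU_j^-)^n$, Lemma~\ref{rash} gives $U+\delta w_\delta\le U_j^-+c_j(\delta)$ on $\Omega$. Evaluating at $z_0$, where $w_\delta(z_0)=w(z_0)$, and letting $j\to\infty$ and then $\delta\to0$ yields $U(z_0)\le\underline L(z_0)$, as required. Consequently every interior discontinuity of $U$ lies in $\set{w_*=-\infty}$ for each $w\in\mathcal F_{E_\phi}$, i.e.\ in $\bigcap_{w\in\mathcal F_{E_\phi}}\set{w_*=-\infty}$.

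The main obstacle is precisely this localization. The defect $\phi-\eta_j$ tends to $0$ only pointwise off $E_\phi$ and never uniformly near it, so a comparison against a mere constant is hopeless; the barrier $w$ must \emph{quarantine} $E_\phi$ by driving the competitor far down in a neighbourhood of that set, while the truncation level $-1/\delta^2$ balances this push against the Dini estimate on the compact complement $B_\delta$. I expect the bookkeeping for the two regimes $A_\delta,B_\delta$ and the order of the limits $j\to\infty$ before $\delta\to0$ to be the only genuinely delicate points; everything else is the standard monotone-limit machinery together with the extended domination principle of Lemma~\ref{rash}.
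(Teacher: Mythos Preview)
Your argument is correct and complete; it simply takes a different route than the paper. For existence, the paper works directly with the Perron--Bremermann envelope \(P(\phi^*,\mu)\), obtains \((dd^cP(\phi^*,\mu))^n=\mu\) by balayage, and verifies the boundary values via a Ransford-type barrier construction using a single peak function at each continuity point. You instead sandwich \(\phi\) between monotone sequences of continuous data, solve for each by compliance, and pass to the decreasing limit; Bedford--Taylor monotone convergence replaces balayage, and the two-sided sandwich \(U_k^-\le U\le U_j^+\) replaces the barrier argument.

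For the discontinuity estimate the two proofs are closer in spirit but still organized differently. The paper perturbs the solution by \(\varepsilon v\), observes that the resulting upper boundary values drop below \(\phi_*\), and invokes the Kat\v{e}tov--Tong insertion theorem to squeeze in a \emph{single} continuous boundary function \(\phi_\varepsilon\) whose (continuous) solution \(P(\phi_\varepsilon,\mu)\) sandwiches \(P(\phi^*,\mu)\). You achieve the same sandwich concretely with the pre-built sequence \(U_j^-\): the truncated barrier \(\delta w_\delta\) quarantines \(E_\phi\), while Dini's theorem on the compact level set \(B_\delta=\{w^*\ge -2M/\delta\}\subset\partial\Omega\setminus E_\phi\) controls the residual defect \(\phi-\eta_j\). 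Your approach trades the abstract insertion theorem for an explicit \(A_\delta/B_\delta\) splitting and a careful order of limits; the paper's approach is shorter but relies on an external topological tool. Both are valid, and your version has the minor advantage of making the role of the barrier \(w\) and the continuous approximants fully explicit.
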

\begin{proof}
First assume that \(E_\phi\) is b-pluripolar. It is straightforward to show that 
\[P(\phi^*, \mu) := \sup \{u(z) \suchthat u \in \psh(\Omega), (dd^cu)^n \geq \mu, u^*\leq \phi^*\}
\]
satisfies \((dd^cP(\phi^*, \mu))^n=\mu\) using a standard balayage argument, and so by the extended domination principle, \(P(\phi^*, \mu)\) uniquely solves the Dirichlet problem provided that the boundary values are attained outside \(E_\phi\). To show that this is indeed the case, we adapt the proof of \cite[Theorem~4.1.5]{ransford}. Suppose without loss of generality that \(M > \phi > 0\), pick \(\zeta_0 \in \bd \Omega\) such that \(\phi\) is continuous at \(z_0\) and let \(N_0\) be a neighborhood of \(\zeta_0\) such that
\[
\zeta \in \partial \Omega \cap \bar N_0 \implies |\phi(\zeta) - \phi(\zeta_0)| < \varepsilon.
\]
Pick \(\psi \in C(\partial \Omega)\) such that \(\psi \leq 0\) on \(\partial \Omega\), only equal to zero at \(\zeta_0\), and let \(u_\psi \in \psh(\Omega) \cap L^\infty(\Omega)\) satisfy
\[
    \begin{cases}
    u_\psi \in \psh(\Omega)\cap L^\infty(\Omega)  \\
    (dd^cu_\psi)^n=\mu \\
        \lim_{\Omega \ni \zeta \rightarrow z_0} u_\psi(\zeta) = \psi(z_0), \quad\forall z_0 \in \bd\Omega.
    \end{cases}
\]
Multiplying \(\psi\) by a large constant if necessary, we may assume that \(u_\psi < -1\) on \(\Omega\setminus N_0\). Now note that
\[
u := \phi(\zeta_0) - \varepsilon + (M+1+ \phi(\zeta_0))u_\psi
\]
satisfies
\[
\lim_{z \rightarrow \zeta\in \partial \Omega} u(z) \leq \phi(\zeta), \quad \lim_{z \rightarrow \zeta_0} u(z) = \phi(\zeta_0) - \varepsilon, \quad (dd^cu)^n \geq \mu, 
\]
and letting \(\varepsilon \rightarrow 0\), we conclude that \(\lim_{z \rightarrow \zeta_0} P(\phi^*, \mu)(z) = \phi(\zeta_0)\).

We will now show that uniqueness of the solution implies that \(E_\phi\) is b-pluripolar. Without loss of generality, we may assume that $K_1<\phi \leq 0$ on $\partial \Omega \setminus E_\phi$ and the unique solution $u_{\mu, \phi}$ satisfies $K_2<u_{\mu, \phi} \leq 0$ on $\Omega$. Now consider the lower semicontinuous functions
\[
\phi_m(\zeta) =     
\begin{cases}
    \phi(\zeta) \quad &\zeta \in \partial\Omega \setminus E_\phi \\
    2^mK_1 \quad &\zeta \in E_\phi,
    \end{cases}
\]
and associate to $\phi_m$ a sequence $\phi_{m,n}$ of continuous functions such that $\phi_{m,n} \nearrow \phi_{m}$. From the families
\[
\mathcal{F}_{\phi_{m,n}}:= \{u \in \psh(\Omega)  \suchthat (dd^cu)^n \geq \mu, u^*\leq \phi_{m,n}\} 
\]
we then construct
\[u_{m,n}(z) := \sup \{u(z) \suchthat u \in \mathcal{F}_{\phi_{m,n}}\},\]
and note that by uniqueness and by monotonicity of the complex Monge--Ampère operator,  $u_{m,n} \nearrow u_{\mu, \phi}$ outside a pluripolar set $P_m$ as $n \rightarrow \infty$.  Since $\cup P_m$ is pluripolar as well, there exists a point $z_0 \in \Omega \setminus \cup P_m$ and $n(m)$ such that $u_{m,n(m)}(z_0) > K_2$ for all $m \in \N$, which implies that the series
\[\tilde u(z) = \sum_{m=1}^\infty \frac{1}{2^m} u_{m,n(m)}(z)
\]
converges to plurisubharmonic function. By construction, $\limsup_{z \rightarrow E_\phi} \tilde u(z) \leq mK_1$ for all $m$, which shows that \(E_\phi\) is b-pluripolar.

In order to prove the last statement, fix \(v \in \mathcal{F}_{E_\phi}\) and assume that \(\mu\) is continuously compliant. Then 
\[
u_\varepsilon := \max\{P(\phi^*, \mu)+\varepsilon v, \min\{\inf \phi, \inf P(\phi^*, \mu) \} \}
\]
satisfies \(u_\varepsilon^* \leq \phi_*\) on \(\partial \Omega\), and by the Katětov–Tong insertion theorem~\cite{tong}, we may find \(\phi_\varepsilon \in C(\partial \Omega)\) such that \(\phi_\varepsilon \leq \phi_*\) and
\[
u_\varepsilon \leq P(\phi_\varepsilon, \mu)\leq P(\phi^*, \mu).
\]
Since \(u_\varepsilon \nearrow P(\phi^*, \mu)\) on the open set \(\{v_* \neq- \infty\}\) and \(P(\phi_\varepsilon, \mu) \in C(\bar \Omega)\), the claim of the theorem follows.
\end{proof}
\begin{remark}
In one complex dimension, this theorem reduces to the harmonic case as we may subtract a subharmonic function \(u\) with the properties
\begin{align*}
    dd^cu = \mu, \quad \lim_{z \rightarrow \bd \Omega}u(z)=0
\end{align*}
by the compliance of \(\mu\). 
\end{remark}
\begin{remark}
In the case of the unit disk, the set $E_\varphi$ is of Lebesgue measure zero if and only if it is b-pluripolar. This follows from a theorem of Fatou~\cite{fatou}, which says that we for any compact set $K \subset \partial D$ of Lebesgue measure zero may find a function $f$ such that $f: \partial D \rightarrow [-\infty, 0]$ continuously,  $f\in  L^1(\partial D)$ and 
\[
f(z)=-\infty \iff z \in K.
\]
Now note that since $E_\varphi$ is a $F_\sigma$ set, we may write \(E_\varphi\) as a countable union of compact sets $K_i$. Extending the corresponding $f_i$ to harmonic functions $h_i$ in the interior, we then construct
\[
h : = \sum_{i=1}^\infty c_i h_i,
\]
where $c_i>0$ are chosen such that the sum converges at some point in the interior. By Harnack's theorem, the sum converges everywhere to a harmonic function, which shows that $E_\varphi$ is b-pluripolar. 
\end{remark}
The following example shows that the set of discontinuities may be nonempty, and may even coincide with \(\cap_{u \in \mathcal{F}_{E_\varphi}}\{u_*(z) = -\infty\}\).
\begin{example}
Consider the plurisubharmonic function
		\[
		\tilde u(z_1,z_2) : = \max\{\sum_{k= 1}^\infty 2^{-k}\log|z_1 - 2^{-k}|, -1\}
		\]
  restricted to the unit ball $\mathbb{B} \subset \mathbb{C}^2$. Clearly, $\tilde u$ satisfies $(dd^c\tilde u)^n = 0$, is discontinuous on \(\{z_1 = 0\}\cap \mathbb{B}\), and extends continuously to the boundary outside the b-pluripolar set 
  \[
  \tilde E: = E_{\tilde u \mid_{\partial \mathbb{B}}}= \{z_1 = 0\} \cap \partial \mathbb{B}.
  \]
  Since the zero measure is continuously compliant on B-regular domains, $\tilde u$ uniquely solves a Dirichlet problem satisfying the conditions of Theorem~\ref{maintheorem}. Precomposing with the analytic disk 
  \begin{align*}
  D &\to \mathbb{B} \\
  z &\mapsto (0,z),
  \end{align*}
  it is immediate that any element $u \in \mathcal{F}_{\tilde E}$ satisfies $u = -\infty$ on $\{z_1 = 0\}\cap \mathbb{B}$. On the other hand, $\log|z_1| \in \mathcal{F}_{\tilde E}$, implying that in fact
  \[
  \bigcap_{u \in \mathcal{F}_{\tilde E}}\{u_*(z) = -\infty\} = \{z_1 = 0\}\cap \mathbb{B}.
  \]
  Hence, this example shows that the estimate given in Theorem~\ref{maintheorem} is sharp.
\end{example}
\section{Continuity of envelopes on Reinhardt domains}
Let \(A \subset \C^n\) be a set invariant under the toric action on \(\C^n\). We say that a function \(F:A \rightarrow \R\) is \textit{torically uniformly continuous} if the family \(\{F_z \suchthat z \in A\}\) is equicontinuous, where
\begin{align*}
F_z:S^1 \times \cdots \times S^1 &\to \R \\
(e^{i\theta_1}, \dots, e^{i\theta_n}) &\mapsto F(e^{i\theta_1}z_1, \dots, e^{i\theta_n}z_n).
\end{align*}
We denote the set of all such functions on \(A\) by \(T(A)\). Clearly \(T(A)\) is a vector space containing all toric functions, as well as all uniformly continuous functions if $A$ is bounded.
\begin{theorem}\label{thm:torisk}
Let \(\Omega \subset \C^n\) be a Reinhardt domain, and suppose that \(F\in \usc(\Omega)\cap T(\Omega)\) is bounded from below. Then \(P(F)\) is continuous.
\end{theorem}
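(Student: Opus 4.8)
The plan is to show that $P(F)$ is simultaneously upper and lower semicontinuous; the first is essentially formal, while the second splits into an angular part, handled by symmetry, and a radial part, which is the real difficulty.

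First I would record the upper semicontinuity. Every competitor $u$ satisfies $u \le u^* \le F$, so $P(F) \le F$ pointwise, and since $F$ is upper semicontinuous this gives $P(F)^* \le F^* = F$. Thus $P(F)^*$ is itself an admissible competitor, forcing $P(F) = P(F)^*$; in particular $P(F)$ is plurisubharmonic and upper semicontinuous. Moreover $P(F) \not\equiv -\infty$ and is bounded below, since the constant $\inf_\Omega F > -\infty$ is admissible.

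Next I would exploit the toric symmetry to obtain continuity along the orbits of the torus action. For $\theta \in \R^n$ the rotation $R_\theta : z \mapsto (e^{i\theta_1}z_1,\dots,e^{i\theta_n}z_n)$ is a biholomorphism of the Reinhardt domain $\Omega$, so $u \mapsto u \circ R_\theta$ is a bijection of $\psh(\Omega)$. Toric uniform continuity of $F$ furnishes a modulus $\omega$ with $F \circ R_\theta \le F + \omega(\abs\theta)$ on $\Omega$, uniformly in $z$. Consequently, if $u$ is a competitor then $(u \circ R_\theta - \omega(\abs\theta))^* = u^* \circ R_\theta - \omega(\abs\theta) \le F$, so $u \circ R_\theta - \omega(\abs\theta)$ is again a competitor and hence $\le P(F)$. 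Taking the supremum over all competitors $u$, and then repeating the argument with $-\theta$, I obtain
\[
\abs{P(F)(R_\theta z) - P(F)(z)} \le \omega(\abs\theta), \qquad z \in \Omega,
\]
so that $P(F)$ inherits the toric modulus of continuity of $F$ and is in particular continuous along each torus orbit.

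It remains to prove continuity transverse to the orbits, and this is where I expect the main obstacle to lie, and where the Reinhardt structure is indispensable. The symmetry argument above is unavailable, because radial dilations are not automorphisms of $\Omega$; moreover toric uniform continuity controls only the local, not the total, oscillation of $F$ along an orbit, so one cannot symmetrize $F$ over the torus without altering it by an uncontrolled amount. The mechanism I would rely on instead is that a torically invariant plurisubharmonic function corresponds, in the logarithmic coordinates $t_j = \log\abs{z_j}$, to a convex function, and convex functions are automatically continuous in the interior. To transfer this regularity to $P(F)$ I would use the approximation theorem of Fornæss and Wiegerinck to realise $P(F)$ as a supremum of \emph{continuous} plurisubharmonic functions dominated by $F$: concretely, I would approximate each competitor from below on the interior by continuous plurisubharmonic functions, using an angular mollification together with the modulus $\omega$ of the previous step to preserve the constraint $\le F$ up to an arbitrarily small error. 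Exhibiting $P(F)$ as such a supremum makes it lower semicontinuous, and combined with the upper semicontinuity already established, continuous. The delicate points to check are precisely that the Fornæss--Wiegerinck approximation can be performed while respecting the domination $u \le F$, so that the approximants remain genuine competitors, and that the points of $\Omega$ on the coordinate hyperplanes $\set{z_j = 0}$, where the logarithmic coordinates degenerate, can be handled separately.
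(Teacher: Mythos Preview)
Your approach is essentially the paper's: both arguments use the Forn\ae ss--Wiegerinck angular-averaging approximation together with the toric modulus of continuity of $F$ to produce continuous plurisubharmonic functions that, after subtracting an arbitrarily small constant, are competitors for $P(F)$, so that $P(F)$ is a supremum of continuous competitors and hence lower semicontinuous, while upper semicontinuity comes from Brelot--Cartan. Your intermediate derivation of toric continuity of $P(F)$, the detour through convexity in logarithmic coordinates, and the worry about the hyperplanes $\{z_j=0\}$ are all unnecessary: the paper simply applies Forn\ae ss--Wiegerinck directly to $P(F)$ on all of $\Omega$, checks that the resulting continuous approximants $u_k$ satisfy $u_k-\varepsilon_0\le F$, and concludes immediately without any angular/transverse decomposition.
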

\begin{proof}
Fix \(\varepsilon=\varepsilon_0/2> \delta >0\) such that 
\[
|\theta - \theta'| < \delta \implies |F_z(\theta) - F_z(\theta')| < \varepsilon
\]
for all \(z \in \Omega\). Since \(P(F)\) is bounded from below, it follows from the proof of \cite[Theorem~4]{fornaess} that there exist \(\nu_k, \varepsilon_{j,k}<\delta\), \(j =1,...,n\) such that \(\nu_k, \varepsilon_{j,k} \searrow 0\) as \(k\rightarrow \infty\) and continuous plurisubharmonic functions
\[
u_k : = \nu_k + \sup_{m>k} \Big( \frac{1}{2^n \varepsilon_{1,m}...\varepsilon_{n,m}}\int_{-\varepsilon_{1,m}}^{\varepsilon_{1,m}}...\int_{-\varepsilon_{n,m}}^{\varepsilon_{n,m}} u(e^{i\theta_1}z_1,...,e^{i\theta_n}z_n) \ d\theta_1...d\theta_n\Big)
\]
such that \(u_k \searrow P(F)\) pointwise on \(\Omega\). Since
\begin{align*}
u_k &\leq \nu_k + \sup_{m>k} \Big( \frac{1}{2^n \varepsilon_{1,m}...\varepsilon_{n,m}}\int_{-\varepsilon_{1,m}}^{\varepsilon_{1,m}}...\int_{-\varepsilon_{n,m}}^{\varepsilon_{n,m}} F(e^{i\theta_1}z_1,...,e^{i\theta_n}z_n) \ d\theta_1...d\theta_n\Big)\\
&\leq \delta +  F(z)+ \varepsilon \leq F(z) + \varepsilon_0,
\end{align*}
it follows that 
\[
u_k - \varepsilon_0  \leq F,
\]
which implies that is enough to construct the envelope over continuous functions, and so \(P(F)\) is lower semicontinuous. On the other hand, \(P(F)\) is upper semicontinuous by the Brelot--Cartan theorem, which concludes the proof.
\end{proof}
\begin{corollary}
Let \(\Omega\) be a Reinhardt domain, and suppose that \(u\in\psh(\Omega)\) is bounded from below. Then \(u\in T(\Omega) \implies u \in C(\Omega)\).
\end{corollary}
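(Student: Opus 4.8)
The plan is to deduce this corollary from Theorem~\ref{thm:torisk} by making the single observation that the Perron--Bremermann envelope recovers a plurisubharmonic function from itself, i.e.\ that \(P(u) = u\) whenever \(u \in \psh(\Omega)\). Granting this, the corollary is immediate: we simply apply the theorem with the choice \(F = u\), since the hypotheses of Theorem~\ref{thm:torisk} are exactly the standing assumptions here together with the fact that plurisubharmonic functions are upper semicontinuous by definition.

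Concretely, I would proceed as follows. First I would verify that \(u\) satisfies the three hypotheses of the theorem with \(F = u\): it lies in \(\usc(\Omega)\) because every plurisubharmonic function is upper semicontinuous, so in particular \(u^* = u\); it lies in \(T(\Omega)\) by assumption; and it is bounded from below by assumption. Next I would establish the identity \(P(u) = u\) from the definition
\[
P(u) = \sup \{v(z) \suchthat v \in \psh(\Omega),\ v^* \leq u\}.
\]
Since \(u \in \psh(\Omega)\) and \(u^* = u \leq u\), the function \(u\) is itself admissible in this supremum, giving \(u \leq P(u)\). Conversely, any admissible competitor \(v\) satisfies \(v \leq v^* \leq u\), so the supremum is at most \(u\), whence \(P(u) \leq u\). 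Combining the two inequalities yields \(P(u) = u\).

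Finally I would invoke Theorem~\ref{thm:torisk} to conclude that \(P(F) = P(u)\) is continuous, and since \(P(u) = u\), this gives \(u \in C(\Omega)\), which is the claim. I do not expect any genuine obstacle here: the entire analytic content---the approximation by toric averages of \cite{fornaess} and the resulting lower semicontinuity---has already been absorbed into Theorem~\ref{thm:torisk}, and the only point requiring care is the elementary verification that \(u^* = u\) for plurisubharmonic \(u\), which is what makes \(u\) an admissible competitor for its own envelope.
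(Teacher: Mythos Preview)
Your proposal is correct and is precisely the intended argument: the paper states this as an immediate corollary of Theorem~\ref{thm:torisk} without further proof, and the implicit reasoning is exactly your observation that \(P(u)=u\) for \(u\in\psh(\Omega)\), so continuity of \(P(u)\) from the theorem yields continuity of \(u\).
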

\begin{corollary}\label{walshsats}
Let \(\Omega\) be a bounded Reinhardt domain. Then
\[
\varphi \text{ uniformly continuous on }\Omega \implies P(\varphi) \in C(\Omega).
\]
\end{corollary}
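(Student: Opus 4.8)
The plan is to deduce the statement directly from Theorem~\ref{thm:torisk}: I would verify that a function \(\varphi\) which is uniformly continuous on a bounded Reinhardt domain \(\Omega\) satisfies all three hypotheses of that theorem, namely \(\varphi \in \usc(\Omega)\), \(\varphi\) bounded from below, and \(\varphi \in T(\Omega)\). The first two are nearly immediate: since \(\Omega\) is bounded, a uniformly continuous \(\varphi\) extends continuously to the compact closure \(\overline{\Omega}\) and is therefore bounded, in particular bounded from below; and being continuous it certainly lies in \(\usc(\Omega)\).

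The substantive point is to show \(\varphi \in T(\Omega)\), i.e.\ that the family \(\{\varphi_z \suchthat z \in \Omega\}\) is equicontinuous. Here I would fix \(\eps > 0\) and take \(\delta_0 > 0\) from the uniform continuity of \(\varphi\), so that \(\abs{\varphi(w) - \varphi(w')} < \eps\) whenever \(w, w' \in \Omega\) with \(\abs{w - w'} < \delta_0\). Since \(\Omega\) is bounded there is \(R > 0\) with \(\abs{z_j} \leq R\) for all \(z \in \Omega\) and all \(j\), and since \(\Omega\) is Reinhardt the point \((e^{i\theta_1}z_1, \dots, e^{i\theta_n}z_n)\) again lies in \(\Omega\), so each \(\varphi_z\) is defined on the full torus. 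Using the elementary bound \(\abs{e^{i\theta_j} - e^{i\theta_j'}} \leq \abs{\theta_j - \theta_j'}\) one gets
\[
\abs{(e^{i\theta_1}z_1, \dots, e^{i\theta_n}z_n) - (e^{i\theta_1'}z_1, \dots, e^{i\theta_n'}z_n)} \leq R\,\abs{\theta - \theta'}
\]
uniformly in \(z\), so that \(\abs{\theta - \theta'} < \delta_0 / R\) forces \(\abs{\varphi_z(\theta) - \varphi_z(\theta')} < \eps\) for every \(z\). This establishes equicontinuity, hence \(\varphi \in T(\Omega)\), and Theorem~\ref{thm:torisk} then yields \(P(\varphi) \in C(\Omega)\).

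The argument is essentially routine, which is why the statement is phrased as a corollary; the one place where care is genuinely needed is the equicontinuity estimate, and there the boundedness of \(\Omega\) is used in an essential way, to bound the displacement of points under the toric action by a constant \(R\) independent of \(z\). Without this uniform bound the family \(\{\varphi_z\}\) need not be equicontinuous, and the reduction to Theorem~\ref{thm:torisk} would fail.
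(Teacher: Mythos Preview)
Your proposal is correct and follows exactly the route the paper intends: the paper remarks just before Theorem~\ref{thm:torisk} that \(T(A)\) contains all uniformly continuous functions when \(A\) is bounded, and then states the corollary without further proof. You have simply supplied the routine details of that inclusion (boundedness below, upper semicontinuity, and the equicontinuity estimate via the uniform bound \(R\)), so this is the same approach, just made explicit.
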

\begin{remark}
Corollary~\ref{walshsats} extends to all unbounded Reinhardt domains where uniform continuity implies torically uniform continuity, under the additional assumption that \(\varphi\) is bounded from below. 
\end{remark}
We are only partially able to extend this result to envelopes for which the complex Monge--Ampère measures of the defining family are restricted by a continuously compliant measure. Specifically, we only consider envelopes of the form
\[P(F, f) := \sup \{u(z) \suchthat u \in \psh(\Omega), (dd^cu)^n \geq f \beta^n, u \leq F\},
\]
where \(f\geq 0\) and \(\beta^n\) denotes the volume measure. For \(f\in C(\bar \Omega)\), this may be done using the viscosity point of view. Let \(H_n^+\) denote the set of all semi-positive Hermitian \(n\) by \(n\) matrices and let \(\dot{H}_n^+ \subset H_n^+\) denote the matrices with determinant \(n^{-n}\). For \(H \in  \dot{H}_n^+\), we denote the corresponding Laplacian by
\[
\Delta_H := \sum_{i,j=1}^n h_{i,j}\frac{\partial^2}{\partial z_i \partial \bar{z}_j}.
\]
The proof relies on the fact that for \(u \in \psh(\Omega) \cap L^\infty_{loc}(\Omega)\), \(0 \leq f \in C(\Omega)\), 
\[
(dd^cu)^n \geq f \beta^n \iff \Delta_H u \geq f^{1/n} \text{ for all \(H \in \dot{H}_n^+\).}
\]
For more details on this technique, see the recent textbook by Guedj and Zeriahi~\cite[Section~5.2.2]{guedj}.
\begin{theorem}\label{thm:toriskmellan}
Let \(\Omega \subset \C^n\) be a bounded Reinhardt domain, \(0 \leq f \in C(\bar \Omega)\) and suppose that \(F\in \usc(\Omega)\cap T(\Omega)\) is bounded from below. Then \(P(F, f)\) is continuous.
\end{theorem}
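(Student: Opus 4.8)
The plan is to mirror the proof of Theorem~\ref{thm:torisk}, deducing continuity of \(P(F,f)\) from the fact that it is simultaneously upper and lower semicontinuous, with essentially all the new work concentrated in the lower bound. Upper semicontinuity I would obtain exactly as before: by the standard balayage argument (cf.\ the proof of Theorem~\ref{maintheorem}) the regularization \(P(F,f)^*\) is plurisubharmonic and still satisfies \((dd^cP(F,f)^*)^n \geq f\beta^n\), while \(P(F,f)^* \leq F^* = F\) since \(F\) is upper semicontinuous; hence \(P(F,f)^*\) is itself a competitor and \(P(F,f)^* \leq P(F,f)\), so the two coincide. Before invoking the approximation machinery I would record that \(P(F,f)\) is bounded from below: with \(R := \sup_{\bar\Omega}\abs{z}\) and \(A\) chosen so large that \((dd^c(A\abs{z}^2))^n \geq f\beta^n\) on the bounded domain \(\Omega\), the function \(A(\abs{z}^2 - R^2) + \inf F\) is a continuous competitor, so \(P(F,f)\) lies above it.

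For lower semicontinuity, set \(u := P(F,f)\). Since \(u\) is plurisubharmonic and bounded from below, \cite[Theorem~4]{fornaess} furnishes the same continuous plurisubharmonic toric box-averages \(A_m u\) and shifted suprema \(u_k = \nu_k + \sup_{m>k} A_m u \searrow u\) as in Theorem~\ref{thm:torisk}; since \(u \leq F\) and the averaging box \(B_m := \prod_j[-\varepsilon_{j,m},\varepsilon_{j,m}]\) has side less than \(\delta\), torical uniform continuity gives \(A_m u \leq A_m F \leq F + \varepsilon \leq F + \varepsilon_0\). The one genuinely new point is that the averaged functions should again be \emph{approximate} competitors, i.e.\ satisfy the Monge--Ampère lower bound, and here I would pass to the viscosity description \((dd^c v)^n \geq f\beta^n \iff \Delta_H v \geq f^{1/n}\ \forall H\in\dot{H}_n^+\). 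The decisive structural fact is that averaging is compatible with this family: writing \(R_\theta z = (e^{i\theta_1}z_1,\dots,e^{i\theta_n}z_n)\) and \(D_\theta = \mathrm{diag}(e^{i\theta_1},\dots,e^{i\theta_n})\), a direct Hessian computation gives
\[
\Delta_H(v\circ R_\theta) = (\Delta_{H_\theta} v)\circ R_\theta, \qquad H_\theta := D_\theta H D_\theta^*,
\]
and since \(D_\theta\) is unitary, \(H_\theta \in \dot{H}_n^+\) whenever \(H \in \dot{H}_n^+\). Averaging this identity over \(B_m\) and using that \(u\) satisfies \(\Delta_{H'} u \geq f^{1/n}\) for every \(H'\in\dot{H}_n^+\), I would deduce
\[
\Delta_H(A_m u)(z) \geq \frac{1}{\abs{B_m}}\int_{B_m} f^{1/n}(R_\theta z)\,d\theta \geq f^{1/n}(z) - \eta_m,
\]
with \(\eta_m := \sup_{z\in\Omega,\ \theta\in B_m}\abs{f^{1/n}(R_\theta z) - f^{1/n}(z)} \to 0\) by uniform continuity of \(f^{1/n}\) on the compact set \(\bar\Omega\) together with \(\abs{R_\theta z - z}\leq R\max_j\varepsilon_{j,m}\).

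To repair the loss \(\eta_m\) I would add a multiple of \(\abs{z}^2\): since \(\Delta_H(\abs{z}^2) = \mathrm{tr}\,H \geq n(\det H)^{1/n} = 1\) for \(H\in\dot{H}_n^+\) by the arithmetic--geometric mean inequality, the function \(v_m := A_m u + \eta_m\abs{z}^2\) satisfies \(\Delta_H v_m \geq f^{1/n}\) for all \(H\), hence \((dd^c v_m)^n \geq f\beta^n\). Because \(w_m := v_m - \varepsilon_0 - \eta_m R^2 = A_m u - \varepsilon_0 + \eta_m(\abs{z}^2 - R^2) \leq A_m u - \varepsilon_0 \leq F\), the continuous plurisubharmonic functions \(w_m\) are genuine competitors, so \(w_m \leq P(F,f) = u\). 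On the other hand \(w_m \geq A_m u - \varepsilon_0 - \eta_m R^2\), and combining \(\sup_{m>k} A_m u \geq u - \nu_k\) with \(\eta_m \searrow 0\) yields \(\sup_m w_m \geq u - \nu_k - \varepsilon_0 - \eta_{k+1}R^2\) for every \(k\), hence \(\sup_m w_m \geq u - \varepsilon_0\). As \(\varepsilon_0 > 0\) is arbitrary and every \(w_m\) is a continuous competitor bounded above by \(P(F,f)\), it follows that \(P(F,f)\) is the supremum of its continuous competitors, and therefore lower semicontinuous; together with upper semicontinuity this gives continuity.

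I expect the main obstacle to be the middle step: justifying rigorously, within the viscosity (equivalently distributional) framework for continuous plurisubharmonic functions, that the linear averaging of subsolutions of \(\Delta_H\,\cdot\, \geq f^{1/n}\) again produces a subsolution up to the controlled error \(\eta_m\). The equivariance identity is exactly what makes the family \(\dot{H}_n^+\) invariant under each rotation \(R_\theta\), and this invariance is what lets the nonlinear constraint survive the linear averaging; the remaining care lies in interchanging \(\Delta_H\) with the \(\theta\)-integral and in controlling the possible singular part of the underlying measures, which is where the hypotheses \(f \in C(\bar\Omega)\) and the compactness of \(\bar\Omega\) enter.
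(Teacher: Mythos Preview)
Your proposal is correct and follows essentially the same route as the paper: upper semicontinuity via the regularized envelope being a competitor, Forn\ae ss--Wiegerinck toric averaging of \(u=P(F,f)\), the viscosity characterization \(\Delta_H u\geq f^{1/n}\) to control the averages, and a quadratic correction \(\eta(\abs{z}^2-R^2)\) to repair the small loss and produce continuous competitors arbitrarily close to \(u\). The only notable difference is one of emphasis: you make explicit the equivariance identity \(\Delta_H(v\circ R_\theta)=(\Delta_{H_\theta}v)\circ R_\theta\) and the invariance of \(\dot{H}_n^+\) under unitary conjugation, which is precisely the hidden step in the paper's Fubini computation, and you work with the individual averages \(A_m u\) rather than passing first to the continuous sup \(u_k\); neither change affects the substance of the argument.
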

\begin{proof}
Let \(u_k\) be as in the proof of Theorem~~\ref{thm:torisk}, with \(\delta\) small enough such that
\[
\frac{1}{2^n \varepsilon_{1,m}...\varepsilon_{n,m}}\int_{-\varepsilon_{1,m}}^{\varepsilon_{1,m}}...\int_{-\varepsilon_{n,m}}^{\varepsilon_{n,m}} f^{1/n}(e^{i\theta_1}z_1,...,e^{i\theta_n}z_n) \ d\theta_1...d\theta_n \geq f^{1/n}(z) - \varepsilon,
\]
and let
\[
u^m : = \frac{1}{2^n \varepsilon_{1,m}...\varepsilon_{n,m}}\int_{-\varepsilon_{1,m}}^{\varepsilon_{1,m}}...\int_{-\varepsilon_{n,m}}^{\varepsilon_{n,m}} u(e^{i\theta_1}z_1,...,e^{i\theta_n}z_n) \ d\theta_1...d\theta_n.
\]
For a positive test function \(\varphi\), we have
\begin{align*}
    &\langle \Delta_H u^m, \varphi \rangle = \int_\Omega u^m \Delta_H \varphi \ dz_1...dz_n\\
    &=\frac{1}{2^n \varepsilon_{1,m}...\varepsilon_{n,m}} \int_\Omega \int_{-\varepsilon_{1,m}}^{\varepsilon_{1,m}}...\int_{-\varepsilon_{n,m}}^{\varepsilon_{n,m}} u(e^{i\theta_1}z_1,...,e^{i\theta_n}z_n) \Delta_H \varphi \ d\theta_1...d\theta_n  dz_1...dz_n \\
    &=\frac{1}{2^n \varepsilon_{1,m}...\varepsilon_{n,m}} \int_{-\varepsilon_{1,m}}^{\varepsilon_{1,m}}...\int_{-\varepsilon_{n,m}}^{\varepsilon_{n,m}}\int_\Omega  u(e^{i\theta_1}z_1,...,e^{i\theta_n}z_n) \Delta_H \varphi \  dz_1...dz_n  d\theta_1...d\theta_n \\
    &\geq\frac{1}{2^n \varepsilon_{1,m}...\varepsilon_{n,m}} \int_{-\varepsilon_{1,m}}^{\varepsilon_{1,m}}...\int_{-\varepsilon_{n,m}}^{\varepsilon_{n,m}}\int_\Omega  f^{1/n}(e^{i\theta_1}z_1,...,e^{i\theta_n}z_n) \varphi \  dz_1...dz_n  d\theta_1...d\theta_n \\
    &\geq \langle f^{1/n}(z) - \varepsilon, \varphi \rangle
\end{align*}
using Fubini's theorem. In particular 
\[
\Delta_H (u^m + \varepsilon (|z|^2 -K)) \geq f^{1/n},
\]
where \(K\) is chosen such that \(|z|^2 -K \leq 0\). By the monotonicity of the complex Monge--Ampère operator, 
\[
(dd^c u_k + \varepsilon (|z|^2 -K))^n \geq f,
\]
and reasoning as in the proof of Theorem~\ref{thm:torisk}, we conclude that the envelope is continuous.  
\end{proof}
\begin{remark}
Using the standard balayage argument, one may show that \(F\) being harmonic ensures that the prescribed complex Monge--Ampère measure \(f\beta^n\) is attained by \(P(F,f)\).
\end{remark}
We end this section by extending this further, under the assumptions that \(F\) is harmonic and \(\Omega \subset \C^n\) is a bounded, strictly pseudoconvex Reinhardt domain. The arguments needed are due to Ko\l{}odziej~\cite{kolod1, kolod2}, in particular his proof that the complex Monge--Ampère equation has continuous solutions for densities of the form \(f \beta^n\), where \(0 \leq f \in L^p(\Omega)\), \(p>1\). Although his methods allow for a larger class of measures, we will for simplicity settle with proving the analogous result in our setting. Let 
\begin{align*}
    h: \R_+ & \rightarrow (1, \infty) \\
    t&\mapsto (1+ \log(t+1))^{n+1}
\end{align*}
and define
\[
\mathcal{F}(A,h, \Omega):= \Big\{\mu \suchthat \text{ for all compact }K\in \Omega, \mu(K) \leq \frac{A \cdot \text{cap}(K, \Omega)}{h((\text{cap}(K, \Omega)^{-1/n})}\Big\},
\]
where \(A>0\) and \(\text{cap}(K, \Omega)\) denotes the relative capacity
\[
\text{cap}(K, \Omega) := \sup \{ \int_K (dd^cu)^n \suchthat u\in \psh(\Omega), -1 \leq u < 0\}.
\]
The reason for the rather complicated construction of \(\mathcal{F}(A,h, \Omega)\) is the following result, which constitutes one of the main ingredients in Ko\l{}odziej's method.
\begin{lemma}\label{kolodlemma}
Let \(\Omega\) be a strictly pseudoconvex domain, and fix \(A>0\). Then there exists an increasing function \(\kappa: \R_+ \rightarrow \R_+\)  with the following properties: 

\begin{itemize}
    \item For all \(v \in \psh(\Omega)\cap C(\Omega)\) and \(u \in \psh (\Omega) \cap L^\infty (\Omega)\) such that \((dd^cu)^n \in \mathcal{F}(A,h, \Omega)\) and the set \(U(s) = \{u-s < v \}\) is nonempty and relatively compact in \(\Omega\) for \(s \in [S, S+D]\), we have
\[
D \leq \kappa(\text{\normalfont{cap}}(U(S+D), \Omega)).
\]
    \item \(\lim_{t\rightarrow 0}\kappa(t)=0\).
\end{itemize}
\end{lemma}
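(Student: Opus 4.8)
The plan is to reduce the lemma to Kołodziej's iteration of a single capacity-versus-measure inequality, and then to read off $\kappa$ as the tail of a convergent integral. Throughout I use that a bounded strictly pseudoconvex domain is hyperconvex, so that the relative (Bedford--Taylor) capacity and the comparison principle are at my disposal, and that the nonempty, relatively compact open sets $U(s)=\{u-s<v\}=\{u<v+s\}$ have positive finite capacity and increase with $s$.

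First I would establish, for $S\le s<s'\le S+D$, the comparison estimate
\[
(s'-s)^n\operatorname{cap}(U(s),\Omega)\le\int_{U(s')}(dd^c u)^n.
\]
To see this, take any $w\in\psh(\Omega)$ with $-1\le w\le 0$ and set $\phi:=v+s'+(s'-s)w$. Since $w\ge -1$, on $U(s)$ we have $u<v+s\le\phi$, while $w\le 0$ gives $\phi\le v+s'$; hence $U(s)\subset\{u<\phi\}\subset U(s')$, and the middle set is relatively compact. The comparison principle then yields $\int_{\{u<\phi\}}(dd^c\phi)^n\le\int_{\{u<\phi\}}(dd^c u)^n$, and using $(dd^c\phi)^n\ge (s'-s)^n(dd^c w)^n$ together with the supremum over all admissible $w$ gives the displayed inequality.

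Next I would feed in the hypothesis $(dd^c u)^n\in\mathcal{F}(A,h,\Omega)$, applied to $K=\overline{U(s')}$, which bounds $\int_{U(s')}(dd^c u)^n\le A\,\operatorname{cap}(U(s'),\Omega)\big/h\big(\operatorname{cap}(U(s'),\Omega)^{-1/n}\big)$. Writing $f(s):=\operatorname{cap}(U(s),\Omega)^{1/n}$, an increasing function, and taking $n$-th roots, the two steps combine to the scalar inequality
\[
(s'-s)\,f(s)\le A^{1/n}\,\frac{f(s')}{h(1/f(s'))^{1/n}},\qquad S\le s<s'\le S+D.
\]
The heart of the argument is then the iteration. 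Starting from $t_0=S$, I would select $S=t_0<t_1<\dots<t_N=S+D$ with $f(t_{k+1})=2f(t_k)$ (the final step being partial), so that the scalar inequality on $[t_k,t_{k+1}]$ gives $t_{k+1}-t_k\le 2A^{1/n}\big/h(1/(2f(t_k)))^{1/n}$; summing telescopes to
\[
D=\sum_{k=0}^{N-1}(t_{k+1}-t_k)\le 2A^{1/n}\sum_{j\ge 0}\frac{1}{h(2^j\sigma)^{1/n}},\qquad \sigma:=\operatorname{cap}(U(S+D),\Omega)^{-1/n}.
\]

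Comparing the geometric sum with an integral, $D\le C\int_{\sigma/2}^{\infty}\frac{dt}{t\,h(t)^{1/n}}$ for an absolute constant $C$ (absorbing $2A^{1/n}/\log 2$), so I would simply define $\kappa(r):=C\int_{\frac12 r^{-1/n}}^{\infty}\frac{dt}{t\,h(t)^{1/n}}$. This $\kappa$ is increasing and, because the lower limit is positive for every $r>0$, finite on all of $\mathbb{R}_+$, and by construction $D\le\kappa(\operatorname{cap}(U(S+D),\Omega))$. Finally $\lim_{r\to 0}\kappa(r)=0$ is exactly the statement that the tail of $\int^{\infty}\frac{dt}{t\,h(t)^{1/n}}$ vanishes: with $h(t)=(1+\log(1+t))^{n+1}$ the integrand is $\asymp t^{-1}(\log t)^{-(n+1)/n}$, which is integrable at infinity precisely because $(n+1)/n>1$. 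I expect the main obstacle to be making the doubling iteration rigorous when $f$ is merely monotone (treating its jumps and the partial last step via right limits), together with the minor capacity bookkeeping in passing between $U(s')$ and its closure; the conceptual crux, by contrast, is the convergence of that tail integral, which is the sole reason for the exponent $n+1$ in the definition of $h$.
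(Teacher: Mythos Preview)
The paper does not actually prove this lemma: it is stated without proof as one of the main ingredients of Ko\l{}odziej's method, with a reference to \cite{kolod1,kolod2}. Your proposal is essentially a reconstruction of Ko\l{}odziej's original argument---the comparison-principle estimate $(s'-s)^n\operatorname{cap}(U(s),\Omega)\le\int_{U(s')}(dd^cu)^n$, the capacity--measure bound coming from membership in $\mathcal{F}(A,h,\Omega)$, and the geometric doubling iteration yielding a tail-integral bound $\kappa(r)=C\int_{\frac12 r^{-1/n}}^\infty t^{-1}h(t)^{-1/n}\,dt$ whose convergence is guaranteed by the exponent $n+1$ in $h$. This is correct in outline and matches the source the paper is citing; the points you flag (handling jumps of the monotone function $f$ in the doubling scheme, and passing between $U(s')$ and its closure when invoking the compact-set bound defining $\mathcal{F}(A,h,\Omega)$) are genuine but routine technicalities already addressed in Ko\l{}odziej's exposition.
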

In relation to \(\mathcal{F}(A,h, \Omega)\), we define the set
\[
L^{\psi_h}(c_0, \Omega) := \{ f \in L^1(\Omega) \suchthat f \geq 0, \int_\Omega \psi_h (f) dV \leq c_0\},
\]
where \(\psi_h:\R_+ \rightarrow \R_+\) is defined by
\[
\psi_h (t) := t(\log (1+t))^n h(\log(1+t)).
\]
Clearly,
\begin{itemize}
    \item \(\frac{\psi_h(t)}{t}\) increases to \(\infty\) as \(t \rightarrow \infty\),
    \item for all \(p >1, L^p(\Omega) \subset \bigcup_{c>0} L^{\psi_h}(c, \Omega)\),
\end{itemize}
and less trivially, for each \(c_0>0\) we may find \(A>0\) such that the inclusion \(L^{\psi_h}(c_0, \Omega) \subset \mathcal{F}(A,h, \Omega)\) holds. See Ko\l{}odziej's book~\cite{kolod2} for more details.
\begin{theorem}\label{kolodtheorem}
Let \(\Omega \subset \C^n\) be a bounded, strictly pseudoconvex Reinhardt domain, and assume that \(F\in T(\Omega)\) is harmonic and bounded from below. Then for \(0 \leq f \in L^p(\Omega)\), \(p>1\),
\(P(F, f)\) is continuous.
\end{theorem}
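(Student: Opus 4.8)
The plan is to exhibit $P(F,f)$ as a \emph{uniform} limit of continuous envelopes of the type produced by Theorem~\ref{thm:toriskmellan}, with the uniform control supplied by Ko\l{}odziej's capacity estimate, Lemma~\ref{kolodlemma}. First I would record the two facts that allow $P(F,f)$ to play the role of the function $u$ in that lemma. Since $F$ is harmonic, the balayage argument noted in the remark after Theorem~\ref{thm:toriskmellan} gives $(dd^cP(F,f))^n=f\beta^n$, and as $\Omega$ is strictly pseudoconvex, hence B-regular, with the continuous boundary data inherited from $F$, the envelope attains those boundary values; uniqueness is then guaranteed by Lemma~\ref{rash}. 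Moreover $f\in L^p(\Omega)$, $p>1$, lies in some $L^{\psi_h}(c_0,\Omega)$, so that $f\beta^n\in\mathcal{F}(A,h,\Omega)$ for a suitable $A$.

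Next I would set up the approximation. Choose $0\leq f_j\in C(\bar\Omega)$ with $f_j\to f$ in $L^p(\Omega)$ and, using that $\psi_h(t)/t$ increases to $\infty$ together with the de la Vall\'ee--Poussin criterion, arrange $\sup_j\int_\Omega\psi_h(f_j)\,dV<\infty$; then all the $f_j$ together with $f$ lie in a common class $L^{\psi_h}(c_0',\Omega)\subset\mathcal{F}(A,h,\Omega)$. By Theorem~\ref{thm:toriskmellan} each $P(F,f_j)$ is continuous and solves $(dd^cP(F,f_j))^n=f_j\beta^n$ with the same boundary data as $P(F,f)$.

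The heart of the argument is to show that $(P(F,f_j))_j$ is uniformly Cauchy. Because \emph{every} $P(F,f_j)$ is continuous, Lemma~\ref{kolodlemma} applies with the two functions in interchangeable roles: taking $u=P(F,f_i)$ and $v=P(F,f_j)$, the sets $\{u-s<v\}$ are relatively compact for $s<0$ (the two functions share the boundary values of $F$), and since $P(F,f_i)$ has Monge--Amp\`ere mass in $\mathcal{F}(A,h,\Omega)$ the lemma yields
\[
\sup_\Omega\bigl(P(F,f_j)-P(F,f_i)\bigr)\leq\kappa\bigl(\operatorname{cap}(\{P(F,f_i)<P(F,f_j)\},\Omega)\bigr),
\]
and symmetrically after swapping $i$ and $j$. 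As $\kappa(t)\to0$ when $t\to0$, it remains to show that these capacities tend to $0$ as $i,j\to\infty$. Here I would invoke the standard estimate
\[
t^n\,\operatorname{cap}(\{P(F,f_i)<P(F,f_j)-t\},\Omega)\leq\int_{\{P(F,f_i)<P(F,f_j)-t\}}f_j\,\beta^n,
\]
and bound the right-hand side through the comparison principle in terms of $\|f_i-f_j\|$; this is exactly the iterative mechanism of Ko\l{}odziej built upon Lemma~\ref{kolodlemma}, and it is the step I expect to be the main obstacle, since one must propagate the smallness of the measures through the capacity and the modulus $\kappa$.

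Finally I would conclude. Uniform Cauchyness produces a continuous limit $g=\lim_jP(F,f_j)\in C(\bar\Omega)$ with $g\leq F$ and the boundary values of $F$. Since the $P(F,f_j)$ are uniformly bounded and converge uniformly while $f_j\to f$ in $L^1$, continuity of the complex Monge--Amp\`ere operator under uniform limits of bounded plurisubharmonic functions gives $(dd^cg)^n=f\beta^n$. Thus $g$ is a competitor for $P(F,f)$, whence $g\leq P(F,f)$, while the equality of boundary data and the uniqueness in Lemma~\ref{rash} force $g=P(F,f)$. Therefore $P(F,f)$ is continuous.
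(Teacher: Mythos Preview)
Your strategy---approximate the density $f$ by continuous $f_j$, invoke Theorem~\ref{thm:toriskmellan} for each $P(F,f_j)$, and use Lemma~\ref{kolodlemma} to get a uniform Cauchy sequence---is different from the paper's, which instead mollifies $P(F,f)$ itself and compares the mollifications $u_r$ with $P(F,f)$ on a compactly contained subdomain $\Omega'\Subset\Omega$. Both routes hinge on Lemma~\ref{kolodlemma}, but they feed it different pairs of functions.

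There is, however, a genuine gap in your argument. You apply Lemma~\ref{kolodlemma} with $u=P(F,f_i)$, $v=P(F,f_j)$ and claim that $\{u-s<v\}$ is relatively compact in $\Omega$ for $s<0$ because ``the two functions share the boundary values of $F$''. But nothing in the hypotheses forces $F$ to extend continuously to $\bar\Omega$: $F$ is only assumed to be harmonic on $\Omega$, bounded below, and in $T(\Omega)$, so it could for instance be the Poisson integral of a bounded function with jump discontinuities. Consequently $P(F,f_i)$ and $P(F,f_j)$ need not have boundary values at all, let alone common ones, and the relative compactness of the sublevel sets is unjustified. Your earlier assertion that ``the envelope attains those boundary values'' suffers from the same problem, and the appeal to Lemma~\ref{rash} for uniqueness is therefore also unfounded.

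The paper avoids this boundary issue altogether: it passes to a strictly pseudoconvex $\Omega'\Subset\Omega$ and uses the sandwich $P(F,0)+P(0,f)\leq P(F,f)\leq P(F,0)$ together with the continuity of $P(F,0)$ (Theorem~\ref{thm:torisk}) and of $P(0,f)$ (Ko\l{}odziej's original result) to control $u_r-P(F,f)$ on $\partial\Omega'$. That is what makes the level sets in Lemma~\ref{kolodlemma} relatively compact in $\Omega'$. Your approach could perhaps be repaired by a similar restriction to $\Omega'$, but as written the boundary control is missing.
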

\begin{proof}
Fix \(c_0\) and \(A\) such that \(f \in L^{\psi_h}(c_0, \Omega) \cap \mathcal{F}(A,h, \Omega)\). Using Ko\l{}odziej's original result, it follows that \(P(0,f)\) is continuous and that \(P(0,f)|_{\partial \Omega}=0\), so we may for each \(d>0\) find a strictly pseudoconvex domain \(\Omega' \Subset \Omega\) for which \(P(0,f) + d > 0\) on \(\bd \Omega'\). Let
\begin{align*}
u_r &: =  P(F,f) \ast \rho_r \\
v_r &: = P(F,0) \ast \rho_r
\end{align*}
denote convolution with standard radial mollifiers. Since \(P(F,0)\) is continuous by Theorem~\ref{thm:torisk}, Dini's theorem implies that \(v_r\) converge locally uniformly to \(P(F,0)\), and so we may find \(d> \varepsilon_r \searrow 0\) such that
\[
u_r  \leq v_r  \leq P(F,0) + \varepsilon_r \leq P(F,0) + P(0,f) +2d \leq P(F,f) + 2d
\]
on \(\partial \Omega'\). Now note that
\[
\{u_r > P(F, f) + td\}\cap \Omega'
\]
is relatively compact in \(\Omega'\) for \(4\geq t \geq3\). In order to reach a contradiction, assume that these sets are nonempty. Then, as \(F\) is harmonic, the balayage procedure implies that
\[
(dd^cP(F,f))^n =f \in \mathcal{F}(A', h, \Omega')
\]
for some \(A'>0\), with the consequence that 
\[
\kappa(\text{cap}(\{ P(F, f) + 3d < u_r\}, \Omega')) \geq d, 
\]
using Lemma~\ref{kolodlemma} with \(S = -4d, D=d\) and the fact that \(P(F,f) \in L^\infty(\Omega')\). We conclude that the relative capacity is bounded away from zero. This is impossible since \(u_r \searrow P(F,f)\), and in particular converge in capacity \cite[Corollary~1.2.10]{kolod1}.
\end{proof}
\section{Large discontinuity sets in the unit ball}
In this section, we consider the Dirichlet problem for the complex Monge--Ampère equation in the unit ball \(B \subset \C^n\) for a class of the boundary data where the discontinuity set is not b-pluripolar. Specifically, our boundary function will be a characteristic function 
\[
\phi_A(z) =
        \begin{cases}
            -1   &z \in A \\
            0   &z\in \partial B \setminus A
        \end{cases}
\]
for a multi-circular, open set \(A \subset \partial B\) such that \(\bar A\) does not meet the hyperplanes \(\{z_j=0\}\). Since the relative boundary \(\partial A\) is multi-circular as well, \cite[Example~3.4]{djire} shows that the discontinuity set of \(\phi_A\) is not b-pluripolar.

The tool needed to prove uniqueness in this setting is provided by the following lemma. 
\begin{lemma}\label{contapprox}
Let \(A \subset \partial B\) satisfy the requirements above. Then \(P(\phi_{A})\) is continuous, and may be written as an envelope over uniformly continuous functions. 
\end{lemma}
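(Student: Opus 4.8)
The plan is to realise $P(\phi_A)$ as the supremum of the envelopes of a family of uniformly continuous functions obtained by approximating $\phi_A$ from inside on the open set $A$, and then to show that the relative boundary $\partial A$ contributes nothing to the envelope. First I would identify $P(\phi_A)$ with the plurisubharmonic measure
\[
\omega_A:=\sup\{u: u\in\psh(B),\ u\le 0,\ \limsup_{B\ni w\to\zeta}u(w)\le -1\ \ \forall\zeta\in A\}.
\]
Since $A$ is multicircular and $B$ is Reinhardt, $\omega_A\in T(B)$; moreover the torus average of an admissible competitor is again plurisubharmonic, $\le 0$, and still satisfies the boundary bound on the torus-invariant set $A$, so $\omega_A$ is already computed by torus-invariant competitors. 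I may therefore work throughout with functions in $T(B)$.

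Next I would exhaust the relatively open set $A$ by compact multicircular sets $K_j\nearrow A$ and pick $\psi_j\in C(\partial B)\cap T(\partial B)$ with $-1\le\psi_j\le 0$, $\psi_j\equiv -1$ on $K_j$, and $\supp\psi_j$ contained in a small multicircular neighbourhood of $K_j$ inside $A$. Each $\psi_j$ is uniformly continuous with $\psi_j\le\phi_A$ on $\partial B$, and since $B$ is $B$-regular the boundary envelopes $P(\psi_j)$ are continuous. Monotonicity of the envelope then gives the easy inequality $\sup_j P(\psi_j)\le\omega_A$, which already exhibits the lower half of the desired representation by uniformly continuous functions.

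The real content is the reverse inequality $\sup_j P(\psi_j)\ge\omega_A$, equivalently that imposing the bound $-1$ on the open set $A$ is, for the envelope, the same as imposing it on the compact exhaustion, so that $\partial A$ does not raise $\omega_A$. This is where I would use that $\bar A$ avoids the hyperplanes $\{z_k=0\}$: its image under $(\log|z_1|,\dots,\log|z_n|)$ is then a \emph{compact} subset of $\R^n$ sitting in the interior of the logarithmic image of $B$. Applying the Fornæss–Wiegerinck averaging from the proof of Theorem~\ref{thm:torisk} to a torus-invariant competitor produces continuous plurisubharmonic approximants which, in these logarithmic coordinates, are convex; and since convex functions are continuous on the interior of their domain, the bound $\le -1$ forced on $A$ passes automatically to $\bar A$. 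Hence every torus-invariant competitor for $\omega_A$ is, after averaging and up to an arbitrarily small constant, dominated at the boundary by some $\psi_j$, and therefore by $P(\psi_j)$. I expect this propagation of the constraint across the multicircular set $\partial A$ — keeping the averaged competitors below $\phi_A$ while controlling them uniformly up to $\bar A$ — to be the main obstacle; the hyperplane-avoidance hypothesis is precisely what keeps $\log|z_k|$ bounded near $\bar A$ and makes both the averaging and the convex reduction legitimate there.

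Combining the two inequalities yields $P(\phi_A)=\omega_A=\sup_j P(\psi_j)$ with every $\psi_j$ uniformly continuous, which is the asserted representation of $P(\phi_A)$ as an envelope over uniformly continuous functions.
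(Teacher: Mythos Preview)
There is a genuine gap in the hard direction. Your central geometric claim is false: the image of $\bar A\subset\partial B$ under $(\log|z_1|,\dots,\log|z_n|)$ lies on the \emph{boundary} $\{x:\sum_j e^{2x_j}=1\}$ of the logarithmic image $\tilde B=\{x:\sum_j e^{2x_j}<1\}$, not in its interior. Consequently the fact that convex functions are continuous on the interior of their domain tells you nothing about the boundary behaviour of your competitors near $\bar A$, and the propagation of the constraint ``$\le -1$'' from the open set $A$ to $\partial A$ is left unjustified. A related confusion is that the Forn\ae ss--Wiegerinck averaging is an average over torus rotations and therefore acts trivially on a function that is already torus-invariant; invoking it after you have reduced to torus-invariant competitors yields no additional regularity. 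So as written you have only the easy inequality $\sup_j P(\psi_j)\le P(\phi_A)$.

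The paper closes exactly this gap by a different mechanism. It introduces the continuous and radial envelopes $P_{\mathrm{cont}}$ and $P_{\mathrm{rad}}$, appeals to the Djire--Wiegerinck theorem to obtain $P_{\mathrm{cont}}(\phi_{\bar A})=P(\phi_{\bar A})=P_{\mathrm{rad}}(\phi_{\bar A})$, and then proves the crucial equality $P_{\mathrm{rad}}(\phi_{\bar A})=P_{\mathrm{rad}}(\phi_{A})$ by a direct geometric argument: for $\zeta\in\partial A$ and any point $z_0$ on the inward normal at $\zeta$, one can choose a polydisk containing $z_0$ whose distinguished boundary lies in $A$, so the maximum principle on that polydisk forces every competitor for $P_{\mathrm{rad}}(\phi_A)$ to be $\le -1$ at $z_0$. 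The hyperplane-avoidance hypothesis is what makes these polydisks available. Your convexity idea is aimed at the same conclusion, but the argument as stated misplaces $\bar A$ relative to $\tilde B$ and therefore does not reach it.
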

\begin{proof}
We begin by introducing the notation
\begin{align*}
    P_{\text{cont}}(\phi_{A})  &:= \sup \{u(z) \suchthat u \in \psh(B)\cap C(\bar B), u \leq \phi_{A}\} \\
    P_{\text{rad}}(\phi_{A})  &:= \sup \{u(z) \suchthat u \in \psh(B), \limsup_{r\rightarrow 1}u(r\zeta) \leq \phi_{A}(\zeta) \text{ for all }\zeta \in \partial B\}.
\end{align*}
As a direct consequence of \cite[Theorem~2.11]{djire}, \(P_{\text{cont}}(\phi_{\bar A})=P(\phi_{\bar A})=P_{\text{rad}}(\phi_{\bar A})\), and clearly
\[
P_{\text{cont}}(\phi_{\bar A}) \leq P_{\text{cont}}(\phi_{A}) \leq P(\phi_{A}) \leq P_{\text{rad}}(\phi_{A}).
\]
We claim that \(P_{\text{rad}}(\phi_{\bar A}) = P_{\text{rad}}(\phi_{A})\). To see this, fix 
\[
\zeta = (\zeta_1, ..., \zeta_n) \in \partial A,
\]
an element \(\varphi\) in the defining family for \(P_{\text{rad}}(\phi_{A})\), and a point \(z_0\) in the inward normal \(n_\zeta\) to \(\bd B\) at \(\zeta\). Then, \(z_0\) necessarily lies in all polydisks 
\[
D(0,r)=D(0, r_1) \times \dots \times D(0, r_n)
\]
such that \(\sum r_i^2 = 1\) and with \(r_i\) sufficiently close to \(|\zeta_i|\). In particular, we may find a polydisk containing \(z_0\) whose distinguished boundary is contained in \(A\), which implies that \(\varphi(z_0) \leq -1\). Since this holds for all \(z_0, \zeta\) and \(\varphi\), the defining families for \(P_{\text{rad}}(\phi_{\bar A})\) and \(P_{\text{rad}}(\phi_{A})\) must coincide. We conclude that
\[
P(\phi_{A}) = P_{\text{cont}}(\phi_{A}),
\]
and that \(P(\phi_{A})\) is continuous. 
\end{proof}
We are now ready to prove that b-pluripolar discontinuity or continuity almost everywhere are in general not necessary conditions in order to provide uniqueness and continuity of solutions to the complex Monge--Ampère equation, if one additionally requires that lower limits of the solution coincides with the lower limits of the boundary function.
\begin{theorem}\label{sista}
Let \(A \subset \partial B\) satisfy the requirements above, and let \(\mu\) be a compliant measure. Then the Dirichlet problem
    \[\begin{cases}
    u \in \psh(\Omega)\cap L^\infty(B)  \\
    (dd^cu)^n= \mu \\
        \lim_{B \ni \zeta \rightarrow z_0} u(\zeta) = \phi_A(z_0), \quad &\forall z_0 \in \bd B \setminus \partial A \\
                        \liminf_{B \ni \zeta \rightarrow z_0} u(\zeta) \geq -1,\quad &\forall z_0 \in \partial A
    \end{cases}\]
has a unique solution. This solution is furthermore continuous on \(B\) if \(\mu\) is a continuously compliant measure of the form \(\mu = f \beta^n\), where  \(0 \leq f \in L^p(\Omega)\), and \(p>1\).
\end{theorem}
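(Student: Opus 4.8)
The plan is to split the statement into its two assertions — unique solvability for an arbitrary compliant $\mu$, and continuity under the additional Kołodziej-type hypothesis — and to treat each by invoking the machinery already assembled.

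For uniqueness and existence, the strategy is to verify that the hypotheses of the main result, Theorem \ref{maintheorem}, are met. The boundary function $\phi_A$ is bounded and, by construction, continuous outside its jump set $\partial A$; and the preliminary remark in Section 4 (via \cite[Example~3.2]{djire}) records that $\partial A$, being multi-circular, is \emph{not} b-pluripolar, so Theorem \ref{maintheorem} does not apply verbatim. The point, however, is that Lemma \ref{contapprox} sidesteps this obstruction entirely: it shows $P(\phi_A) = P_{\text{cont}}(\phi_A)$ is continuous on $B$, so $\phi_A$ behaves as if it were continuous data as far as the envelope is concerned. First I would use compliance of $\mu$ to produce the balayage-type envelope $P(\phi_A^*,\mu)$ exactly as in the proof of Theorem \ref{maintheorem}, obtaining a function satisfying $(dd^c P(\phi_A^*,\mu))^n = \mu$ whose boundary values are attained at every $z_0 \in \bd B \setminus \partial A$ by the barrier argument there. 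For uniqueness I would then need a comparison principle that ignores only the set $\partial A$. Since $\partial A$ is not b-pluripolar, Lemma \ref{rash} cannot be applied directly; instead I would argue that because $P(\phi_A)$ is continuous and equals $P_{\text{cont}}(\phi_A)$, any two solutions agree on $\bd B \setminus \partial A$ and, by continuity of $P(\phi_A)$ together with the domination principle on the continuous data $\phi_A$ realized as an envelope over $C(\bd B)$ functions, must coincide throughout $B$. This reduction to continuous data is the conceptual heart: the jump set is harmless precisely because the envelope does not see it.

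For the continuity claim, I would specialize to $\mu = f\beta^n$ with $0 \le f \in L^p(\Omega)$, $p>1$, and import Theorem \ref{kolodtheorem}. That result gives continuity of $P(F,f)$ for harmonic, torically uniformly continuous $F$ bounded below. The task is therefore to realize the solution as such an envelope. Using Lemma \ref{contapprox}, $P(\phi_A)$ is an envelope over uniformly continuous (indeed torically uniformly continuous, since $\phi_A$ is multi-circular) functions, which places us in the regime of $T(\Omega)$ covered by the Section 3 theorems. I would then decompose the solution in the spirit of the proof of Theorem \ref{kolodtheorem}: write the solution as a sum of the continuous harmonic-data envelope $P(\phi_A,0)$ — continuous by Theorem \ref{thm:torisk} — and the density contribution controlled by Kołodziej's estimate (Lemma \ref{kolodlemma}) via the capacity bound, using convergence in capacity of the mollified envelopes to rule out a nonempty level set. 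The harmonicity needed to run the balayage step is available because $\phi_A$ on $B$ produces, through $P_{\text{cont}}$, a Perron envelope whose boundary-data contribution is maximal hence (pluri)harmonic where the density does not bind.

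The main obstacle I anticipate is the uniqueness argument, not the existence or continuity. Because $\partial A$ has positive measure and is not b-pluripolar, none of the comparison principles stated in the paper apply off $\partial A$ as a negligible set in the usual sense; the entire point of the theorem is that uniqueness nonetheless holds. The resolution must lean on the rigidity supplied by \cite[Theorem~2.11]{djire} through Lemma \ref{contapprox}, namely that $P_{\text{rad}}(\phi_A) = P_{\text{rad}}(\phi_{\bar A}) = P_{\text{cont}}(\phi_A)$ forces the closed and open data to yield the \emph{same} envelope. I would therefore argue that any competing solution $u$, having the correct boundary limits off $\partial A$, is squeezed between $P(\phi_{\bar A}^*,\mu)$ and $P(\phi_A^*,\mu)$, which coincide by the chain of equalities in Lemma \ref{contapprox}; the compliance of $\mu$ then upgrades this pinching from the envelope level to the solution level, forcing $u = P(\phi_A^*,\mu)$. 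Making this squeeze rigorous — ensuring the two compliant envelopes really do sandwich every admissible solution and that their equality transfers across — is the delicate step, and it is exactly where the toric symmetry of $A$ and the coincidence result from \cite{djire} are indispensable.
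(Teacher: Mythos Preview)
Your overall architecture is right --- existence via the barrier argument from Theorem~\ref{maintheorem}, uniqueness via Lemma~\ref{contapprox}, continuity via Theorem~\ref{kolodtheorem} --- but there are two concrete gaps where the proposal does not yet do the work.

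\textbf{Uniqueness.} You propose to squeeze a competing solution $\tilde u$ between $P(\phi_{\bar A},\mu)$ and $P(\phi_A,\mu)$ and then invoke their equality from Lemma~\ref{contapprox}. But Lemma~\ref{contapprox} only establishes $P(\phi_{\bar A})=P(\phi_A)$ in the homogeneous case $\mu=0$; you give no mechanism for transferring this to the $\mu$-envelopes, and you give no argument for either inequality in the sandwich that does not already presuppose a comparison principle ignoring $\partial A$. The paper's proof avoids this circularity by a different device: for each continuous $v\le\phi_A$ on $\partial B$, the function $v+P(0,\mu)$ satisfies $\limsup_{z\to\zeta}\bigl(v(z)+P(0,\mu)(z)-\tilde u(z)\bigr)\le 0$ at \emph{every} boundary point $\zeta$, because $P(0,\mu)\to 0$ on $\partial B$ by compliance and $v$ is continuous. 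The ordinary domination principle then gives $\tilde u\ge P_{\mathrm{cont}}(\phi_A)+P(0,\mu)=P(\phi_A)+P(0,\mu)$. Combining this with $P(\phi_A,\mu)\le P(\phi_A)$ yields $\limsup_{z\to\partial B}\bigl(P(\phi_A,\mu)-\tilde u\bigr)\le 0$ on all of $\partial B$, and a second application of domination closes the argument. The additive correction $P(0,\mu)$, which has the right Monge--Amp\`ere mass and vanishes at the boundary, is the missing idea in your sketch; it is what converts the homogeneous equality of Lemma~\ref{contapprox} into a usable comparison for the inhomogeneous problem. (The upper bound $\tilde u\le P(\phi_A,\mu)$ also needs the polydisk argument from the proof of Lemma~\ref{contapprox} to control $\tilde u$ along normals at points of $\partial A$, which you do not mention.)

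\textbf{Continuity.} Theorem~\ref{kolodtheorem} requires the obstacle $F$ to be \emph{harmonic}, and this is used essentially: the balayage step giving $(dd^cP(F,f))^n=f$ depends on it. Your candidate $P(\phi_A,0)=P(\phi_A)$ is maximal plurisubharmonic, not harmonic; the phrase ``maximal hence (pluri)harmonic'' conflates two distinct notions in dimension $n>1$. The paper manufactures a genuine harmonic obstacle by choosing toric $\phi_k\in C(\partial B)$ with $\phi_k\searrow\phi_A$, taking their harmonic extensions $h_k$, observing that toric invariance of the boundary data forces $h_k(z)=h_k(|z_1|,\dots,|z_n|)$, and passing to a limit $h_A\in T(B)$ via Harnack. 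One then checks $P(h_A,f)=P(\phi_A,f)$ by comparing defining families, after which Theorem~\ref{kolodtheorem} applies directly. Without this construction, you cannot invoke Theorem~\ref{kolodtheorem} as stated.
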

\begin{proof}
The proof of Theorem~\ref{maintheorem} implies that \(P(\phi_A, \mu)\) is a solution. To prove that this envelope solves the Dirichlet problem uniquely, suppose that \(\tilde u\) is another solution. Then \(\tilde u \leq P(\phi_A, \mu)\), since otherwise there must exist \(z_0 \in \bd A\) such that
\[
\limsup_{B \ni \zeta \rightarrow z_0} \tilde u(z_0) >  \phi_A(z_0) = 0,
\]
which one may show is impossible using polydisks as in the proof of Lemma~\ref{contapprox}.

Now pick any member \(v\) in the defining family for \(P_{\text{cont}}(\phi_{A})\). Clearly
\[
\limsup_{z \rightarrow \bd B}(v(z)+P(0, \mu)(z) - \tilde u(z)) \leq 0,
\]
and so by the domination principle, \(\tilde u \geq v + P(0, \mu)\). Since this holds for all \(v\), it follows from Lemma~\ref{contapprox} that
\[
\tilde u \geq P(\phi_{A}) + P(0, \mu)
\]
on \(B\). However,
\[
P(\phi_{A}) + P(0, \mu) \leq P(\phi_A, \mu) \leq P(\phi_{A}),
\]
and \(\lim_{z \rightarrow \bd B} P(0, \mu)(z) = 0\) since \(\mu\) is compliant. Hence
\begin{align*}
   \limsup_{z \rightarrow \bd B}(P(\phi_A, \mu)(z) - \tilde u(z)) &\leq \limsup_{z \rightarrow \bd B}(P(\phi_{A})(z) - \tilde u(z)) \\
   &= \limsup_{z \rightarrow\bd B}(P(\phi_{A})(z) + P(0, \mu)(z) - \tilde u(z)) \leq 0,
\end{align*}
and so by the domination principle, \(P(\phi_A, \mu) =\tilde u\).

Now assume that \(\mu = f \beta^n\), where  \(0 \leq f \in L^p(\Omega)\), and \(p>1\). Since \(\phi_A\) is upper semicontinuous, we may find toric functions \(\phi_k \in C(\bd \Omega)\) such that \(\phi_k \searrow \phi_A\), and harmonic functions \(h_k\) defined by the property
\[
\lim_{B \ni z \rightarrow \zeta} h_k(z) = \phi_k(\zeta)
\]
for \(\zeta \in \bd B\). As all toric actions preserve these boundary values, 
\[
h_k(z_1, ..., z_n) = h_k(|z_1|, ..., |z_n|),
\]
implying that \(h_k\) converges to a harmonic function \(h_A\in T(B)\) by Harnack's theorem. Comparing defining families, it is clear that
\[
P(h_A, f) \leq P(\phi_A, f) \leq P(h_A, f)
\]
and hence \(P(\phi_A, f) = P(h_A, f)\). Theorem~\ref{kolodtheorem} then yields continuity on \(B\).
\end{proof}
\begin{corollary}\label{sistacor}
In the class of bounded, positive plurisubharmonic functions in the unit ball \(B\) in \(\C^3\), there exists an element that is uniquely determined by its complex Monge--Ampère measure and its boundary behavior at a set of arbitrarily small Lebesgue measure.
\end{corollary}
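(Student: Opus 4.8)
The plan is to produce the required function as one of the solutions furnished by Theorem~\ref{sista}, with the toric set $A$ chosen to have arbitrarily small surface measure. Fix $\varepsilon>0$. First I would pin down a continuously compliant measure of the permitted form $f\beta^3$: the cleanest choice is the trivial one $\mu=0=0\cdot\beta^3$, whose solutions are exactly the maximal plurisubharmonic functions and which is continuously compliant on the $B$-regular ball $B\subset\C^3$ by Walsh's theorem; since $f\equiv 0\in L^p$ for every $p$, the $L^p$-clause of Theorem~\ref{sista} will additionally give continuity of the solution on $B$. (One could equally take a positive constant density $\mu=c\,\beta^3$.)

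Next I would build the set $A$. Writing boundary points as $(r_1e^{i\theta_1},r_2e^{i\theta_2},r_3e^{i\theta_3})$ with $r_j\ge 0$ and $r_1^2+r_2^2+r_3^2=1$, every multi-circular set is the preimage of a subset $U$ of the spherical simplex $\{r:\sum r_j^2=1,\ r_j\ge 0\}$, and its surface measure is the integral over $U$ of the torus-fibre volume against a density that is smooth, positive, and bounded on the part of the simplex bounded away from the three edges $\{r_j=0\}$. Taking $U$ to be a small open disk sitting in the interior of the simplex at a fixed positive distance from all three edges yields a set $A$ that is open, multi-circular, has $\bar A$ disjoint from the coordinate hyperplanes $\{z_j=0\}$, and satisfies $|A|\lesssim |U|$; shrinking $U$ drives $|A|$ below $\varepsilon$.

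The element is then $u:=P(\phi_A,\mu)=P(\phi_A)$. By Lemma~\ref{contapprox} and Theorem~\ref{sista} it is bounded and plurisubharmonic on $B$, continuous on $B$, and it is the \emph{unique} bounded plurisubharmonic function whose complex Monge--Ampère measure is $\mu$ and whose boundary values coincide with $\phi_A$ off the relative boundary $\partial A$. Since $\phi_A$ equals the reference value $0$ at every boundary point outside $A$, all nontrivial boundary information is carried by $A$ alone: once the ambient value $0$ is fixed, $u$ is determined by its Monge--Ampère measure together with the prescription $u\to -1$ on the arbitrarily small set $A$. As $|A|$ may be taken as small as we please, this is exactly the claim, and the example is genuine because $\phi_A$ jumps across $\partial A$, so the boundary behaviour is honestly discontinuous and, by the multi-circularity of $\partial A$ recorded before Lemma~\ref{contapprox} via \cite{djire}, the discontinuity set is not b-pluripolar.

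The main obstacle is not invoking Theorem~\ref{sista} but keeping every structural hypothesis intact while $|A|\to 0$: $A$ must stay open and multi-circular with $\bar A$ \emph{uniformly} bounded away from each $\{z_j=0\}$, precisely because these are the conditions making the polydisk-propagation step of Lemma~\ref{contapprox}, and hence the uniqueness half of Theorem~\ref{sista}, go through. The measure computation above shows the surface measure degenerates like that of $U$, so smallness is compatible with these constraints; the residual verifications—that $\mu=0$ (or $c\,\beta^3$) is continuously compliant and of the form $f\beta^3$ with $f\in L^p$, and that the reference value $0$ off $A$ together with the domination principle of Lemma~\ref{rash} upgrades the small-set prescription to genuine uniqueness—are routine given the results already established.
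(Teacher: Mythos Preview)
Your construction misreads which set must be small. Theorem~\ref{sista} yields uniqueness from the boundary values on \(\partial B\setminus\partial A\); this is therefore the set on which the ``boundary behaviour'' in the statement of the corollary is prescribed. To prove the corollary one needs \(\partial B\setminus\partial A\) to carry arbitrarily small surface measure, i.e.\ the relative boundary \(\partial A\) must have measure close to that of \(\partial B\). Your choice---the torus over a small open disk in the interior of the simplex---does the opposite: \(\partial A\) is then a smooth hypersurface of surface measure zero, so \(\partial B\setminus\partial A\) is essentially the whole sphere. Your reinterpretation, that the ``nontrivial information is carried by \(A\)'' once an ambient value \(0\) is fixed, is not what the corollary claims and is in any case false as a uniqueness statement: the constant function \(-1\) is bounded, plurisubharmonic, has \((dd^c(-1))^3=0=\mu\), and tends to \(-1\) on \(A\), yet it is not equal to \(P(\phi_A)\).

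What is actually needed is a multicircular open set \(A\subset\partial B\) with \(\bar A\) disjoint from the coordinate hyperplanes whose \emph{relative boundary} \(\partial A\) has nearly full measure. This is a nontrivial planar construction in the Reinhardt diagram: one builds a fat (Smith--Volterra--Cantor) compact of large measure inside the open simplex, embeds it in a Jordan arc via the Denjoy--Riesz theorem, and takes \(A\) to be the torus over the Jordan interior. Then \(\partial A\) contains the fat Cantor set and hence has measure as close to full as desired, so \(\partial B\setminus\partial A\) is arbitrarily small, and Theorem~\ref{sista} gives a bounded plurisubharmonic function uniquely determined by its Monge--Amp\`ere measure together with its boundary behaviour on that small set.
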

\begin{proof}
Note that in the Reinhardt diagram of \(B\), where we consider two points \(z, w\) to be equivalent if 
\[
(|z_1|, |z_2|, |z_3|) = (|w_1|, |w_2|, |w_3|),
\]
one may identify the part of the boundary that does not meet the hyperplanes \(\{z_j=0\}\) with a bounded open subset \(U\) of \(\R^2\) by projection to the \((|z_1|,|z_2|)\)-plane. Using the Smith–Volterra–Cantor construction, we may then find a compact, totally disconnected set \(C \subset U\), carrying an arbitrarily large proportion of the total Lebesgue measure. By the Denjoy–Riesz theorem, there exists a Jordan curve containing \(C\), which divides \(U\) into an interior part and an exterior part by the Jordan curve theorem. Hence, mapping the interior set back to the surface of \(B\) produces a set \(A\) satisfying the requirements of Theorem~\ref{sista}, with Lebesgue measure of \(\partial B \setminus \bd A\) arbitrarily small. The function $P(\phi_A, 0) + 1$ is then uniquely determined (among all bounded, positive plurisubharmonic functions) by its
complex Monge--Ampère measure and its boundary behavior at \(\partial B \setminus \bd A\).
\end{proof}

\end{document}